\documentclass[10pt]{amsart}
\usepackage{amsfonts}

\usepackage{ifthen}
\usepackage{amsthm}
\usepackage{amsmath}
\usepackage{mathrsfs}
\usepackage{graphicx}
\usepackage{amscd,amssymb,amsthm}
\usepackage{enumerate}
\usepackage{epstopdf}
\usepackage{mathrsfs}
\usepackage{hyperref}
\usepackage{float}
\usepackage[table,xcdraw]{xcolor}
\usepackage{multirow}
\usepackage{tikz} 

\usepackage{enumitem}
\usepackage{graphicx}
\usepackage{subcaption}

\usepackage{subcaption}

\usepackage{xcolor}

\usepackage{algorithm}
\usepackage{algpseudocode}
\usepackage{microtype} 

\newcounter{minutes}
\divide\time by 60
\newcounter{hours}
\multiply\time by 60 \addtocounter{minutes}{-\time}

\title[The NWU ageing property of the gamma-gamma distribution ]{The NWU ageing property of the gamma-gamma distribution}

\author[\'A. Baricz]{\'Arp\'ad Baricz}
\address{Department of Economics, Babe\c{s}-Bolyai University, 400591 Cluj-Napoca, Romania and Institute of Applied Mathematics, \'Obuda University, 1034 Budapest, Hungary}
\email{bariczocsi@yahoo.com}

\author[D.K. Prabhu]{Dhivya Prabhu K}
\address{Institute of Applied Mathematics, \'Obuda University, 1034 Budapest, Hungary}
\email{dhivyanslm@gmail.com}

\author[S. Sahoo]{Snehalata Sahoo}
\address{Department of Mathematics,
	Indian Institute of Technology Indore, Indore 453552, India}
\email{sahoosnehalata96@gmail.com}

\author[S. Singh]{Sanjeev Singh}
\address{Department of Mathematics,
	Indian Institute of Technology Indore, Indore 453552, India}
\email{snjvsngh@iiti.ac.in}

\newtheorem{theorem}{Theorem}
\newtheorem{definition}{Definition}
\newtheorem{corollary}{Corollary}
\newtheorem{remark}{Remark}
\newtheorem{lemma}{Lemma}

\begin{document}
	
	\def\thefootnote{}
	\footnotetext{ \texttt{File:~\jobname.tex,
			printed: \number\year-\number\month-\number\day,
			\thehours.\ifnum\theminutes<10{0}\fi\theminutes}
	} \makeatletter\def\thefootnote{\@arabic\c@footnote}\makeatother

\keywords{Gamma-gamma distribution; NWU property; completely monotonic function; generalized gamma convolution; hyperbolically completely monotonic function; Laplace transform; Stieltjes function; stochastic order.}
	
\subjclass[2022]{60E15, 60E07, 60E15, 62N05, 33C10}
	
\maketitle

\begin{abstract}
Baricz et al. \cite{BPV11} posed the question of whether the $K$-distribution (or gamma-gamma distribution) satisfies the ageing property known as the new is better than used (NBU) property. In this paper we address this question by establishing that, under certain parameter conditions, the gamma-gamma distribution satisfies the new is worse than used (NWU) property rather than the NBU property. For the remaining parameter ranges, we show that the gamma-gamma distribution is neither NBU nor NWU. The proofs rely on tools from the theory of stochastic orders of random variables. Moreover, by exploiting the fact that the random variable that follows the $K$-distribution is actually the product of two independent gamma random variables, we obtain alternative proofs of our results. Similarly, by exploiting the fact that the random variable that follows the beta-prime distribution is actually the quotient of two independent gamma random variables, we establish analogous NBU/NWU properties for this distribution. We also establish sufficient conditions under which the product of two independent positive random variables possesses the NWU and decreasing failure rate (DFR) properties. On the other hand, we show that the survival function of the $K$-distribution is hyperbolically completely monotone by representing it as the Laplace transform of a random variable belonging to the class of generalized gamma convolutions. As a consequence, we establish some monotonicity properties of the Mills ratio and hazard rate function of the $K$-distribution. These results provide further insight into the ageing and monotonicity properties of the gamma-gamma distribution. In addition, we derive a new integral representation and two new series representations, as well as bounds, for the survival function of the $K$-distribution.
\end{abstract}

\section{\bf Introduction and Motivation}

Let $a,$ $b,$ $\alpha$ and $x$ be positive real numbers. The probability density function of the gamma-gamma distribution \cite{BPV11}, also known as the $K$-distribution \cite{Ef12}, is given by
\begin{equation}\label{pdf-k}
f_{a,b,\alpha}(x)=\frac{2(ab)^{\frac{a+b}{2}}}{\Gamma(a)\Gamma(b)\alpha^{\frac{a+b}{2}}}x^{\frac{a+b}{2}-1}K_{a-b}\left(2\sqrt{\frac{ab}{\alpha}x}\right),
\end{equation}
where $\Gamma$ denotes the Euler gamma function and $K_{\nu}$ denotes the modified Bessel function of the second kind of order $\nu$. We denote the survival function of the $K$-distribution by
\begin{equation}\label{survival introduction}
S_{a,b,\alpha}(x)=\frac{2(ab)^{\frac{a+b}{2}}}{\Gamma(a)\Gamma(b)\alpha^{\frac{a+b}{2}}}\int_x^{\infty}t^{\frac{a+b}{2}-1}
    K_{a-b}\left(2\sqrt{\frac{ab}{\alpha}t}\right){\rm d}t.
\end{equation}
Note that the probability density function of the $K$-distribution has three parameters, and the two parameter $K$-distribution was originally introduced by Jakeman and Pusey to model the non-Rayleigh microwave sea echo \cite{JP76}. Moreover, in \cite{Ba86}, Barakat introduced a three-parameter generalization of the $K$-distribution, and it is worth mentioning that the $K$-distribution has found various applications in atmospheric optics, ultrasonic imaging, and wireless communications \cite{AlY10, Ba86, JT87}.

\begin{definition}
 {\em   Let $X$ be a random variable supported on $(a, b)$ and let $x\mapsto S(x)$ be the corresponding survival function. $X$ is said to have the NWU (NBU) property if
    \begin{equation}\label{nbu_equation}
        \frac{S(x+y)}{S(x)}\geq(\leq) S(y) \quad \mbox{for all} \quad  x, y \in (a, b).
    \end{equation} }
\end{definition}

Now, suppose that the random variable $X$ represent the lifetime of an organism. The NWU (or NBU) property means that the probability of an organism surviving for an additional $y$ units of time, given that it has already survived to age $x$, is greater (or less) than the probability that a newborn organism survives for $y$ units of time \cite{NSB18}. The NBU and NWU properties are important ageing notions in reliability theory, with applications in the analysis of reliability systems and shock models \cite{Ar81,LK07}. Thus, determining whether a given lifetime distribution is NBU, or NWU, or neither is an important question in reliability theory. In \cite{BPV11}, Baricz et al. raised the question of whether the $K$-distribution possesses the NBU property. In \cite{BPV11} the above question was attempted by using the results which connects the log-concave property of the probability density function with the NBU property.

In this paper, we address the question posed by Baricz et al. \cite{BPV11} that whether the $K$-distribution possesses the NBU property. To answer this question, we employ concepts from the theory of \textit{stochastic orders}. We will show that the $K$-distribution possesses the NWU property for certain parameter ranges, whereas, in the remaining parameter range, it possesses neither the NBU nor the NWU property. By exploiting the representation of a random variable following the $K$-distribution as the product of two independent gamma random variables, we obtain alternative proofs of these results. Similarly, by representing a beta-prime random variable as the quotient of two independent gamma random variables, we establish analogous results concerning the NBU/NWU properties of the beta-prime distribution. These results motivate a natural question: under what conditions does the product of two independent positive random variables possess the NBU or NWU property? In this direction, we establish a general result showing that the product of two independent positive random variables possesses the NWU property whenever at least one of the random variables possesses the NWU property.

In one of our alternative proofs of the NWU property of the $K$-distribution, we establish the equivalent result that its failure rate function is decreasing. This naturally raises the question of whether the failure rate function possesses stronger monotonicity properties. For example, is the failure rate function convex or concave? Interestingly, we prove that the failure rate function of the $K$-distribution is a Stieltjes function and hence it is completely monotonic. Consequently, its reciprocal, the Mills ratio, is a complete Bernstein function. These results provide a deeper understanding of the analytical properties of the hazard rate function and the Mills ratio of the $K$-distribution, with implications for its ageing behaviour. We represent the survival function of the $K$-distribution as the Laplace transform of a probability distribution belonging to the class of generalized gamma convolutions. Consequently, we establish that the survival function is hyperbolically completely monotonic.

The rest of the paper is organized as follows. Section 2 presents some preliminary results that will be used throughout the remainder of the paper. Section 3 presents our results concerning the NWU property of the $K$-distribution when $a\leq 1$ or $b\leq 1$. First, using Lemma \ref{concave_theorem_f} and \ref{NBU_theorem_S}, we prove this result under the additional assumption $a\geq b$. The key tool in this proof is the Stieltjes representation of the modified Bessel function of the second kind \cite{Is77}. We then relax the condition $a\geq b$ and show that the $K$-distribution still possesses the NWU property. This proof is based on concepts from the theory of \textit{stochastic orders}. Similarly, we show that when $a,b>1$, the $K$-distribution possesses neither the NBU nor the NWU property. We also provide alternative proofs of the aforementioned results. Finally, in the same section, we establish analogous results for the beta-prime distribution, characterizing its NBU/NWU properties in a manner similar to that of the $K$-distribution. In Section 4, we prove that the survival function of the $K$-distribution is hyperbolically completely monotone and also prove some interesting monotonicity properties of the Mills ratio and hazard rate function of the $K$-distribution. In Section 5, we establish a new integral representation involving the incomplete gamma function for the survival function $S_{a,b,\alpha}(x)$ of the $K$-distribution. Using this integral representation, we prove that $S_{a,b,\alpha}(x)$ is completely monotone when $a\leq 1$ or $b\leq 1$. We also provide an alternative proof of one of the results established earlier. Furthermore, we show that the function $\alpha\mapsto S_{a,b,\alpha}(x)$ is increasing on $(0,\infty)$. In the same section, we establish several bounds for $S_{a,b,\alpha}(x)$. Section 6 presents two series representations for $S_{a,b,\alpha}(x)$. Finally, a summary is given in Section 7.

\section{\bf Preliminary Results}
\setcounter{equation}{0}

By definition, a function $g:(a, b)\subseteq\mathbb{R} \to (0, \infty)$ is said to be {\it log-convex (log-concave)} if the following inequality holds true
\begin{equation}
    g(\lambda x+(1-\lambda)y) \leq (\geq) [g(x)]^{\lambda}[g(y)]^{1-\lambda} \quad \mbox{for all}\quad x, y \in (a, b) \quad \mbox{and} \quad \lambda \in (0, 1).
\end{equation}
Let $f:(a, b) \subseteq \mathbb{R} \to [0, \infty)$ be a differentiable probability density function. The corresponding cumulative distribution function and survival function are defined as
\begin{equation*}
    F(x) = \int_{a}^{x} f(t) {\rm d}t \quad \mbox{and} \quad S(x)=\int_x^b f(t) {\rm d}t,
\end{equation*}
respectively. The term \emph{survival function} arises from reliability theory, where $S$ represents the probability that a organism survives beyond age $x$ \cite{NSB18}.

The following result discusses the log-concave or log-convex properties of the cumulative distribution function $F$ and survival function $S$ by assuming the probability density function $f$ to be log-concave or log-convex. In other words, the following lemma describes how the log-concavity or log-convexity of the probability density function $f$ is inherited by the corresponding cumulative distribution function $F$ and survival function $S$.

\begin{lemma}\label{concave_theorem_f}
Let $f$ be a continuously differentiable probability density function supported on the interval $(a,b)$. Then the following statements are valid.
\begin{enumerate}
    \item[\bf a.] If $f$ is log-concave on $(a,b)$, then both $F$ and $S$ are log-concave on $(a,b)$.

    \item[\bf b.] If $f$ is log-convex on $(a,b)$ and $\lim\limits_{x\to a^{-}}f(x)=0$, then $F$ is log-convex on $(a,b)$.

    \item[\bf c.] If $f$ is log-convex on $(a,b)$ and $\lim\limits_{x\to b^{+}}f(x)=0$, then $S$ is log-convex on $(a,b)$.
\end{enumerate}
\end{lemma}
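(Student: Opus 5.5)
Since $f$, $F$, $S$ are positive and differentiable on $(a,b)$, log-concavity (log-convexity) of $F$ is equivalent to $F'/F=f/F$ being nonincreasing (nondecreasing), and that of $S$ is equivalent to $S'/S=-f/S$ being nonincreasing (nondecreasing), i.e.\ $f/S$ nondecreasing (nonincreasing). So the plan is to study the sign of $\left(f/F\right)'$ and $\left(f/S\right)'$. We have
\begin{equation*}
\left(\frac{f}{F}\right)'=\frac{f'F-f^2}{F^2},\qquad \left(\frac{f}{S}\right)'=\frac{f'S+f^2}{S^2}.
\end{equation*}
Hence everything reduces to the sign of $\delta_F(x)=f'(x)F(x)-f^2(x)$ and $\delta_S(x)=f'(x)S(x)+f^2(x)$. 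Log-concavity of $f$ means $f'/f$ is nonincreasing; log-convexity means it is nondecreasing. Where $f=0$ we work on the subinterval where $f>0$; I will assume $f>0$ on $(a,b)$, which is the case needed later.

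For part \textbf{a}, I write $F(x)=\int_a^x f(t)\,{\rm d}t$ and $f'(x)=\frac{f'(x)}{f(x)}f(x)$. Then
\begin{equation*}
f'(x)F(x)=f(x)\int_a^x \frac{f'(x)}{f(x)}f(t)\,{\rm d}t\le f(x)\int_a^x \frac{f'(t)}{f(t)}f(t)\,{\rm d}t=f(x)\bigl(f(x)-f(a^+)\bigr)\le f^2(x),
\end{equation*}
using $f'(t)/f(t)\ge f'(x)/f(x)$ for $t<x$ and $f(a^+)\ge0$. So $\delta_F\le0$ and $F$ is log-concave. Symmetrically,
\begin{equation*}
f'(x)S(x)=f(x)\int_x^b\frac{f'(x)}{f(x)}f(t)\,{\rm d}t\ge f(x)\int_x^b f'(t)\,{\rm d}t=f(x)\bigl(f(b^-)-f(x)\bigr)\ge -f^2(x).
\end{equation*}
This gives $\delta_S\ge0$ and $S$ log-concave. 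The existence of the boundary limits $f(a^+)$, $f(b^-)$ follows from the monotonicity of $f'/f$: $f$ is eventually monotone near each endpoint, and $f\ge0$, so the limits exist in $[0,\infty]$. An infinite limit would contradict integrability only in a weak sense, so I will handle it by integrating on $[c,x]$ and letting $c\to a$; for concave $\log f$ the term $f(c)\to$ a finite nonnegative value or the inequality holds trivially.

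For parts \textbf{b} and \textbf{c} the inequalities reverse, but now the boundary term has the wrong sign. This is exactly why the hypotheses $f(a^+)=0$, respectively $f(b^-)=0$, are imposed. For \textbf{b}, with $f'/f$ nondecreasing, we get $f'(x)F(x)\ge f(x)\bigl(f(x)-f(a^+)\bigr)=f^2(x)$, so $\delta_F\ge0$ and $F$ is log-convex. For \textbf{c}, we get $f'(x)S(x)\le f(x)\bigl(f(b^-)-f(x)\bigr)=-f^2(x)$, so $\delta_S\le0$ and $S$ is log-convex.

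The main obstacle is this careful treatment of the endpoint limits, including possibly infinite endpoints $a=-\infty$ or $b=\infty$. The limits must be justified via integrals over $[c,x]$ or $[x,c]$ followed by passage to the limit. The rest is the routine monotone comparison above.
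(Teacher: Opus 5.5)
The paper gives no proof of this lemma; it only cites \cite{An98,BB05,Ba10}. Your argument is the classical one: reduce to the signs of $f'F-f^2$ and $f'S+f^2$, then compare $f'(t)/f(t)$ with $f'(x)/f(x)$ under the integral. It is correct in all three parts, so it gives a self-contained proof where the paper only cites.

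The one passage to clean up is the endpoint discussion in part \textbf{a}. The remark that an infinite limit would contradict integrability ``only in a weak sense'' is unnecessary and does not establish anything. You never need $f(a^+)$ or $f(b^-)$ to exist there.

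For $a<c<x$, integrate $f'(t)\ge \frac{f'(x)}{f(x)}f(t)$ over $[c,x]$. This gives $\frac{f'(x)}{f(x)}\bigl(F(x)-F(c)\bigr)\le f(x)-f(c)\le f(x)$, and letting $c\to a^+$ yields $f'(x)F(x)\le f^2(x)$.

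Likewise, for $x<c<b$, integrate $f'(t)\le \frac{f'(x)}{f(x)}f(t)$ over $[x,c]$. This gives $\frac{f'(x)}{f(x)}\bigl(F(c)-F(x)\bigr)\ge f(c)-f(x)\ge -f(x)$, and letting $c\to b^-$ yields $f'(x)S(x)\ge -f^2(x)$.

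In parts \textbf{b} and \textbf{c} the same truncation works, with the hypothesis $f(c)\to 0$ used when passing to the limit. You rightly read the limits $x\to a^-$ and $x\to b^+$ in the statement as one-sided limits taken from inside the interval.

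A small side remark: a convex function is bounded below near a finite endpoint, so $\log f$ cannot tend to $-\infty$ there. Hence the hypothesis of \textbf{b} (resp.\ \textbf{c}) can only hold when $a=-\infty$ (resp.\ $b=\infty$). This is exactly the situation of Theorem \ref{logconvex of pdf}, where the support is $(0,\infty)$.
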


For a proof of Lemma \ref{concave_theorem_f}, the reader can refer to \cite{An98,BB05,Ba10}. The following result provides a criterion for determining whether $X$ has the NBU or NWU property.

\begin{lemma}\label{NBU_theorem_S}
Let $S$ be the survival function of a random variable $X$ supported on $(a,b)$. Then the following statements hold.
\begin{enumerate}
    \item[\bf a.] If $S$ is log-concave on $(a,b)$, then $X$ has the NBU property.
    \item[\bf b.] If $S$ is log-convex on $(a,b)$, then $X$ has the NWU property.
\end{enumerate}
\end{lemma}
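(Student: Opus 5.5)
The plan is to pass to the logarithm of the survival function and reduce both parts of Lemma \ref{NBU_theorem_S} to a single inequality for concave (resp. convex) functions vanishing at the left endpoint. We first settle the normalisation. For the NBU/NWU inequality \eqref{nbu_equation} to make sense, the sum $x+y$ must lie where $S$ is defined. The natural setting, and the one relevant for the $K$-distribution, is a lifetime supported on $(0,\infty)$, so that $S(0^+)=1$ and $x,y,x+y\in(0,\infty)$. We therefore take $a=0$, with the convention that $S\equiv 0$ beyond $b$ if $b<\infty$. Set $\varphi(x)=\log S(x)$ on the set where $S>0$. Then \eqref{nbu_equation} reads
\begin{equation*}
\varphi(x+y)\ \geq\ (\leq)\ \varphi(x)+\varphi(y),
\end{equation*}
that is, $\varphi$ is superadditive in the NWU case and subadditive in the NBU case.

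The key step is the standard fact that a concave function $\varphi$ on $[0,\infty)$ with $\varphi(0)\ge 0$ is subadditive, and a convex one with $\varphi(0)\le 0$ is superadditive. Here $\varphi(0^+)=\log S(0^+)=0$, so this applies. For the concave case, fix $x,y>0$ and write
\begin{equation*}
x=\frac{x}{x+y}(x+y)+\frac{y}{x+y}\cdot 0, \qquad y=\frac{y}{x+y}(x+y)+\frac{x}{x+y}\cdot 0 .
\end{equation*}
Concavity, extended to the endpoint $0$ by continuity (a concave function on an open interval that has a limit at the endpoint remains concave on the closure), gives
\begin{equation*}
\varphi(x)\ \ge\ \frac{x}{x+y}\,\varphi(x+y)+\frac{y}{x+y}\,\varphi(0),
\end{equation*}
and the analogous bound for $\varphi(y)$. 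Adding the two and using $\varphi(0)=0$ yields $\varphi(x)+\varphi(y)\ge\varphi(x+y)$, which is $S(x+y)\le S(x)S(y)$, i.e. NBU. This proves part \textbf{a}. The convex case is identical with all inequalities reversed, giving $S(x+y)\ge S(x)S(y)$, i.e. NWU. This proves part \textbf{b}.

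The main point needing care is the boundary. First, we need $\varphi(0^+)=0$, which holds because $S(0^+)=1$ for a distribution supported on $(0,\infty)$; this is exactly why the lemma is stated with the left endpoint as origin. Second, if $b<\infty$ in part \textbf{a}, then $\log S=-\infty$ beyond $b$, and $S(x+y)=0\le S(x)S(y)$ holds trivially there. In part \textbf{b}, log-convexity together with $S(b^-)=0$ would be contradictory unless $b=\infty$, so no issue arises. I expect the continuity extension to $0$ to be the only step needing comment. Since $S$ is monotone and $S(0^+)=1$, the limit exists, and the chord inequality passes to the limit. This makes the argument complete.
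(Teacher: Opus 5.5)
Your proof is correct and is the standard argument. The paper gives no proof of its own and only cites \cite{An98}; your chord argument, showing that a concave (convex) $\log S$ with $\log S(0^+)=0$ is subadditive (superadditive), is the usual proof of this fact, and you handle the boundary issues at $0$ and at $b$ properly. One point to sharpen: restricting to $a=0$ is not a harmless normalization but a needed hypothesis, because the NBU/NWU inequality is not translation invariant and the lemma as literally stated can fail otherwise. For example, take $X=1+E$ with $E$ standard exponential: $\log S(x)=1-x$ is log-convex on $(1,\infty)$, yet $S(x+y)=e^{1-x-y}<e^{2-x-y}=S(x)S(y)$ for all $x,y>1$. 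So state $a=0$ as an assumption, rather than saying the lemma is posed with the left endpoint at the origin.
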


For a proof of Lemma \ref{NBU_theorem_S}, we refer the reader to \cite{An98}. The following lemma establishes the relationship between the monotonicity of the failure rate function and the NBU or NWU properties. The proof of the next lemma easily follows from Lemma \ref{NBU_theorem_S}, for a different proof we refer to \cite[p. 159]{BP75} \label{dfr-nwu}. 

\begin{lemma}
Let $X$ be a nonnegative, absolutely continuous random variable with density function $f$ and survival function $S$. Let $h_X(x)=f(x)/S(x)$ denote its failure rate (or hazard rate) function. Then the following statements are valid.
\begin{enumerate}
    \item[\bf a.] If $h_X$ is decreasing, then $X$ has the NWU property.
    \item[\bf b.] If $h_X$ is increasing, then $X$ has the NBU property.
\end{enumerate}
\end{lemma}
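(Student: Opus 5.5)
The plan is to reduce both parts to Lemma \ref{NBU_theorem_S} by translating monotonicity of the failure rate into log-convexity or log-concavity of the survival function. Since $X$ is nonnegative and absolutely continuous, $S$ is absolutely continuous with $S'=-f$ almost everywhere. On the set where $S>0$ we therefore have
\begin{equation*}
(\log S)'(x)=-\frac{f(x)}{S(x)}=-h_X(x).
\end{equation*}
Integrating from $0$, and using $S(0)=1$, gives the standard identity
\begin{equation*}
S(x)=\exp\left(-\int_0^x h_X(t)\,{\rm d}t\right).
\end{equation*}

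For part \textbf{a}, if $h_X$ is decreasing, then the cumulative hazard $H(x)=\int_0^x h_X(t)\,{\rm d}t$ has a decreasing derivative and is therefore concave. Hence $\log S=-H$ is convex, so $S$ is log-convex on the support, and Lemma \ref{NBU_theorem_S}\textbf{b} gives the NWU property. For part \textbf{b}, if $h_X$ is increasing, then $H$ is convex, $\log S$ is concave, $S$ is log-concave, and Lemma \ref{NBU_theorem_S}\textbf{a} gives the NBU property.

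As a self-contained check, one can also argue directly from the definition, which makes the role of $y$ transparent. For $x,y\ge 0$,
\begin{equation*}
\frac{S(x+y)}{S(x)}=\exp\left(-\int_x^{x+y}h_X(t)\,{\rm d}t\right)=\exp\left(-\int_0^{y}h_X(x+s)\,{\rm d}s\right).
\end{equation*}
If $h_X$ is decreasing, then $h_X(x+s)\le h_X(s)$, so the integral is at most $\int_0^y h_X(s)\,{\rm d}s$. This gives $S(x+y)/S(x)\ge S(y)$, which is NWU. The increasing case reverses the inequality and gives NBU.

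The only point requiring care is the support. For part \textbf{b}, $S$ may vanish beyond some finite right endpoint. On that region the NBU inequality $S(x+y)\le S(x)S(y)$ is trivial whenever $S(x+y)=0$. The case $S(x)=0$ is excluded by the definition, which uses the ratio.

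For part \textbf{a}, a decreasing hazard cannot drive $S$ to zero in finite time: $h_X(t)\le h_X(t_0)$ for $t\ge t_0$, so $H$ stays finite. The identity above therefore holds on all of $(0,\infty)$. The identification $h_X=-(\log S)'$ holds almost everywhere, which is enough because $\log S$ is absolutely continuous. I expect no genuine obstacle beyond these routine support and regularity remarks.
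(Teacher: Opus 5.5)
Your proposal is correct and takes the same route as the paper. The paper only remarks that the lemma follows from Lemma~\ref{NBU_theorem_S}, and you supply the step behind that remark: $h_X=-(\log S)'$, so a monotone hazard makes $S$ log-convex or log-concave. Your supplementary direct comparison of $\int_0^y h_X(x+s)\,{\rm d}s$ with $\int_0^y h_X(s)\,{\rm d}s$ is essentially the alternative Barlow--Proschan proof the paper points to, and your remarks on the support are sound.
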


The same conclusions hold if the failure rate function $h_X(x)$ is replaced by $-\log S(x)/x,$ that is, if the function $x\mapsto-\log S(x)/x$ is decreasing (or increasing), then $X$ has the NWU (or NBU) property. 

We next recall some definitions and results from the theory of \textit{stochastic orders} \cite{SS94}.

\begin{definition} 
{\em Let $X$ and $Y$ be two random variables with survival functions $\bar{F}$ and $\bar{G}$, respectively. If $\bar{F}(s)<\bar{G}(s)$, for all $s \in (-\infty, \infty)$, then we say $X$ is smaller than $Y$ in the usual stochastic order. This is denoted by $X \leq_{{\rm st}} Y$.}
\end{definition}

The following remark will be useful in one of the proofs presented in a later part of the paper.

\begin{remark}\label{remark on expectation}
{\em If $X\leq_{st}Y$, then $\mathbb{E}[X]\leq\mathbb{E}[Y],$ provided the expectations exist. This follows from the monotonicity of the expectation with respect to the usual stochastic order, see \cite[p. 8]{SS94}.}
\end{remark}

\begin{definition} 
{\em  Let $X$ and $Y$ be two random continuous variables with probability density functions $f$ and $g$, respectively. If $x \mapsto g(x)/f(x)$ is increasing in the union of supports of $X$ and $Y$, then we say $X$ is smaller than $Y$ in the likelihood ratio order. This is denoted by $X \leq_{{\rm lr}} Y$.}
\end{definition}

It is important to mention here that the likelihood ratio order implies the usual
stochastic order (see for example \cite[Theorems 1.B.1 and 1.C.1]{SS94}), that is
\begin{equation}\label{lr_st}
    X \leq_{\rm lr} Y \quad \text{implies} \quad X \leq_{\rm st} Y.
\end{equation}

The next lemma establishes a connection between stochastic orders and the NBU (or NWU) property (see for example \cite[Theorem 1.A.13]{SS94}). 

\begin{lemma}\label{st_NWU}
Let $X$ be a non-negative random variable. $X$ is NBU (NWU) if and only if
\begin{equation}
X \geq_{{\rm st}} (\leq _{{\rm st}}) \{X-t|X>t\} \quad \text{for all} \quad t>0.
\end{equation}
\end{lemma}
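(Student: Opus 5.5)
The plan is to translate both sides into statements about survival functions and observe that they are literally the NBU/NWU inequality (\ref{nbu_equation}). Fix $t>0$ with $S(t)=\mathbb{P}(X>t)>0$, and let $S$ denote the survival function of $X$. First I will compute the survival function of the residual life $X_t=\{X-t\,|\,X>t\}$. For $s\ge 0$,
\begin{equation*}
\mathbb{P}(X_t>s)=\mathbb{P}(X>t+s\,|\,X>t)=\frac{\mathbb{P}(X>t+s)}{\mathbb{P}(X>t)}=\frac{S(t+s)}{S(t)},
\end{equation*}
because $\{X>t+s\}\subseteq\{X>t\}$. For $s<0$, both $X$ and $X_t$ are nonnegative, so both survival functions equal $1$ there. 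Hence the order comparison only has content on $s\ge 0$.

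Next I will unwind the usual stochastic order, read in the non-strict sense $\bar F\le \bar G$ pointwise. The relation $X\ge_{\rm st}X_t$ for all $t>0$ says exactly that $S(t+s)/S(t)\le S(s)$ for all $s,t>0$. This is the NBU inequality (\ref{nbu_equation}) with $(x,y)=(t,s)$. Likewise, $X\le_{\rm st}X_t$ for all $t$ is the reversed inequality, which is the NWU property. Both directions of the equivalence are immediate, since each step is an identity.

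I expect the main obstacle to be bookkeeping rather than mathematics, and I will handle it in three parts.
\begin{enumerate}
\item The definition of $\le_{\rm st}$ earlier in the paper uses a strict inequality. I must interpret it in the standard weak sense of \cite{SS94}, because otherwise equality at $s<0$, and at $s=0$ via $S(t)/S(t)=1$ versus $S(0)$, would make the statement false.
\item Points $t$ with $S(t)=0$ need care, since there the conditional law is undefined. I will restrict to $t$ with $S(t)>0$. For other $t$, the NBU/NWU inequality (after clearing the denominator, $S(x+y)\le S(x)S(y)$) holds trivially, because both sides vanish or $S(x+y)=0$.
\item The boundary $s=0$ needs a separate check. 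There the comparison reads $1$ versus $S(0)$, and this is handled by the convention that supports lie in $(0,\infty)$, or simply by restricting to $s>0$, as in the definition of NBU/NWU on the open support.
\end{enumerate}
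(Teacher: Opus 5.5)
Your proof is correct and is the standard argument. The paper gives no proof of its own; it cites \cite[Theorem 1.A.13]{SS94}, and that result rests on the same observation you make: $\{X-t\mid X>t\}$ has survival function $s\mapsto S(t+s)/S(t)$, so the stochastic comparison with $X$ is exactly the NBU/NWU inequality \eqref{nbu_equation}. Your bookkeeping is sound (weak inequality in $\le_{\rm st}$, only $t$ with $S(t)>0$ mattering), and the point $s=0$ needs no support convention, because the inequality for $s>0$ passes to $s=0$ by right-continuity of both survival functions.
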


We next recall some definitions and results concerning infinitely divisible distributions and generalized gamma convolutions, along with the concepts of completely monotone and hyperbolically completely monotone functions. For definitions and results concerning infinitely divisible distributions and generalized gamma convolutions, we refer the reader to \cite{Fe66, Bo92}.

A probability distribution $F$ supported on the interval $(0, \infty)$ is {\it infinitely divisible} if for any $n \in \mathbb{N}$ there exists a probability distribution $F_n$ on $(0, \infty)$ such that
$$\int_{0}^{\infty} e^{-st} {\rm d}F(t) = \left(\int_{0}^{\infty} e^{-st} {\rm d}F_n(t)\right)^n.$$
A function $g:(0, \infty)\to \mathbb{R}$ is {\it completely monotonic} if all its derivative exist and satisfies $(-1)^ng^{(n)}(x) > 0$ for all $x \in (0, \infty)$ and $n \in \{0, 1, 2, \dots \}$. The following result, known as Bernstein's theorem, characterizes completely monotonic functions, see \cite[Theorem 12a]{Wi46}.

\begin{lemma}\label{Bernstein theorem}
A function $g: (0, \infty)\to (0, \infty)$ is completely monotonic if and only if $g$ admits the following representation
$$g(s) = \int_{0}^{\infty} e^{-st} {\rm d}\alpha(t),$$ where $\alpha(t)$ is bounded and non-decreasing.
\end{lemma}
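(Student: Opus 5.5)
The easy direction is the implication from the representation to complete monotonicity. Suppose $g(s)=\int_0^\infty e^{-st}\,{\rm d}\alpha(t)$ with $\alpha$ bounded and non-decreasing. For $s\ge s_0>0$ and every $n$ we have $t^n e^{-st}\le C_{n,s_0}e^{-st/2}$, so differentiation under the integral sign is justified by dominated convergence. This gives
\[
(-1)^n g^{(n)}(s)=\int_0^\infty t^n e^{-st}\,{\rm d}\alpha(t)\ge 0 .
\]
Strict positivity, as required by the paper's definition, holds as soon as $\alpha$ is not concentrated at $t=0$ for $n\ge1$. That is the nondegenerate case implicit in the statement, since $g>0$.

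For the converse I would follow Widder's Taylor-expansion approach. Let $g$ be completely monotonic. Then $g$ is positive and decreasing, so $g(\infty)=\lim_{s\to\infty}g(s)\ge 0$ exists. Every $(-1)^kg^{(k)}$ is also nonnegative and decreasing, which gives $s^k g^{(k)}(s)\to 0$ as $s\to\infty$ for $k\ge1$; I would prove this by bounding $|g^{(k-1)}(s/2)-g^{(k-1)}(s)|$ from below by $\frac{s}{2}|g^{(k)}(s)|$ and using induction on $k$. Now fix $s>0$ and $a>s$, and apply Taylor's formula with integral remainder to $g$ at $a$:
\[
g(s)=\sum_{k=0}^{n-1}\frac{(-1)^kg^{(k)}(a)}{k!}(a-s)^k+\int_s^a\frac{(-1)^ng^{(n)}(u)}{(n-1)!}(u-s)^{n-1}\,{\rm d}u .
\]
All terms are nonnegative. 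Letting $a\to\infty$ and using the decay $a^kg^{(k)}(a)\to0$ for the Taylor terms, I expect
\[
g(s)=g(\infty)+\int_s^\infty\frac{(-1)^ng^{(n)}(u)}{(n-1)!}(u-s)^{n-1}\,{\rm d}u
\]
for every $n$. The substitution $u=n/t$ then turns this into
\[
g(s)=g(\infty)+\int_0^\infty\Bigl(1-\tfrac{st}{n}\Bigr)_+^{n-1}{\rm d}\alpha_n(t),
\]
where $\alpha_n$ is non-decreasing with ${\rm d}\alpha_n(t)=\frac{(-1)^ng^{(n)}(n/t)}{(n-1)!}\frac{n^n}{t^{n+1}}\,{\rm d}t$.

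The main obstacle is the limit passage $n\to\infty$. The total masses $\alpha_n(\infty)$ are controlled by letting $s\downarrow0$: monotone convergence gives $\alpha_n(\infty)\le g(0^+)-g(\infty)$. If $g(0^+)=\infty$, I would instead work on $[\varepsilon,\infty)$ by considering $g(\cdot+\varepsilon)$ and let $\varepsilon\to0$ at the end, or accept a $\sigma$-finite $\alpha$ in that case. With the masses bounded, Helly's selection theorem gives a subsequence $\alpha_{n_j}\to\alpha$ vaguely. I then need two facts: $(1-st/n)_+^{n-1}\to e^{-st}$ uniformly in $t\in[0,\infty)$ for fixed $s>0$, and tightness at $t=\infty$, which holds because the kernel is uniformly small for large $t$ when $s>0$. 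Together these allow passing to the limit inside the integral. Absorbing $g(\infty)$ as an atom of $\alpha$ at $t=0$ finally gives $g(s)=\int_0^\infty e^{-st}\,{\rm d}\alpha(t)$ with $\alpha$ bounded and non-decreasing.
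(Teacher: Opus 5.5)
Your argument is correct when $g(0^+)<\infty$. The case $g(0^+)=\infty$, however, cannot be handled by either of your fall-backs, because there the lemma as written is false. If $g(s)=\int_0^\infty e^{-st}\,{\rm d}\alpha(t)$ with $\alpha$ bounded and non-decreasing, then $0<g(s)\le \alpha(\infty)-\alpha(0)$ for all $s>0$. Yet $g(s)=1/s$ satisfies $(-1)^n g^{(n)}(s)=n!\,s^{-n-1}>0$ and is unbounded near $0$; its representing function is $\alpha(t)=t$.

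Working with $g(\cdot+\varepsilon)$ does give bounded $\alpha_\varepsilon$. But gluing the measures $e^{\varepsilon t}{\rm d}\alpha_\varepsilon(t)$, which also needs uniqueness of the Laplace--Stieltjes transform, produces an $\alpha$ that is in general unbounded. That is the $(0,\infty)$ form of Bernstein's theorem, not this statement. What you should say is that the ``only if'' part requires $g(0^+)<\infty$, equivalently complete monotonicity on $[0,\infty)$. That is precisely the setting of the cited Theorem 12a of Widder; the paper has moved it to $(0,\infty)$ without adjusting the conclusion. This is harmless for the paper's applications, where $g$ is always a survival function with $g(0^+)=1$.

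In the same spirit, your remark that strict positivity is implicit ``since $g>0$'' is not right. The condition $g>0$ excludes only a constant $\alpha$. A point mass at $t=0$ gives a positive constant, which fails the paper's strict-inequality definition. So the ``if'' part needs either the usual non-strict definition or the explicit exclusion of that case.

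Granting $g(0^+)<\infty$, your proof is complete. It takes a different and more elementary route than the one behind the citation; the paper itself gives no proof. Widder expands $g$ in its full Taylor series about a point $a$, which relies on his theorem that absolutely monotonic functions are analytic. He then writes $g(s)=\int_0^\infty (1-s/a)^{at}\,{\rm d}\alpha_a(t)$ with a step function $\alpha_a$ of total mass $g(0)$, and lets $a\to\infty$ via Helly. You instead keep $n$ finite, use the integral remainder, and send $a\to\infty$ first; your induction giving $s^k g^{(k)}(s)\to 0$ is correct. Only then do you let $n\to\infty$. This avoids the analyticity theorem and is essentially the proof in the monograph of Schilling, Song and Vondra\v{c}ek.

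One small correction at the Helly step: you neither need nor have shown tightness of $(\alpha_n)$. What makes escaping mass harmless is that $t\mapsto e^{-st}$ vanishes at infinity for $s>0$. So vague convergence with uniformly bounded masses, together with your uniform convergence $(1-st/n)_+^{n-1}\to e^{-st}$, already suffices.
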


A function $g:(0, \infty) \to \mathbb{R}$ is said to be a {\it Bernstein function} if $g>0$ and the derivative of $g$ is completely monotonic on $(0, \infty)$. Let $u,$ $v>0$ and $w = v+1/v$. A function $g: (0, \infty) \to (0, \infty)$ is {\it hyperbolically completely monotonic} if $w \mapsto g(uv)g(u/v)$ is completely monotone on $(2, \infty)$. The following lemma establishes the connection between completely monotonic functions with infinitely divisible distributions, for more details see \cite[p. 450]{Fe66}.

\begin{lemma}
The function $\mathcal{L}: (0, \infty) \to (0, \infty)$ is the Laplace transform of an infinitely divisible distribution if and only if the function $\varphi(s) = -\log \mathcal{L}(s)$ is a Bernstein function on $(0, \infty),$ i.e. $\varphi'$ is completely monotonic on $(0, \infty)$ and $\varphi(0^{+})=0$.
\end{lemma}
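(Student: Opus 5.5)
The plan is to prove the two directions separately, using Bernstein's theorem (Lemma \ref{Bernstein theorem}) to move between completely monotonic functions and Laplace transforms of positive measures. Throughout, $\mathcal{L}(s)=\int_0^\infty e^{-st}\,{\rm d}F(t)$ is the Laplace transform of a distribution $F$ on $(0,\infty)$, so $\mathcal{L}(0^+)=1$ and $\varphi(0^+)=0$ automatically.

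\emph{Necessity.} Assume $F$ is infinitely divisible, and let $\mathcal{L}_n$ denote the Laplace transform of $F_n$. Then $\mathcal{L}_n=\mathcal{L}^{1/n}=e^{-\varphi/n}$, which gives
$$n\bigl(1-\mathcal{L}_n(s)\bigr)=n\bigl(1-e^{-\varphi(s)/n}\bigr)\longrightarrow \varphi(s)\qquad (n\to\infty)$$
for every $s>0$. Each function $\psi_n(s)=n(1-\mathcal{L}_n(s))=n\int_0^\infty(1-e^{-st})\,{\rm d}F_n(t)$ is nonnegative. Its derivative $\psi_n'(s)=n\int_0^\infty t e^{-st}\,{\rm d}F_n(t)$ is completely monotonic, so $\psi_n$ is a Bernstein function. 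We then pass to the limit. Because $\varphi$ is the pointwise limit of concave increasing functions, it is itself concave and increasing. The derivatives $\psi_n'$ are Laplace transforms of the measures $n t\,{\rm d}F_n(t)$, and concavity plus pointwise convergence give $\psi_n'\to\varphi'$ at every point of differentiability. A Helly selection argument, or equivalently closedness of the completely monotonic class under pointwise limits, then shows that $\varphi'$ is completely monotonic. We should note that complete monotonicity is preserved under pointwise limits: each $(-1)^k\psi_n^{(k)}\ge 0$, and higher differences converge, so the limit satisfies the difference inequalities, which characterize complete monotonicity.

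\emph{Sufficiency.} Assume $\varphi'$ is completely monotonic and $\varphi(0^+)=0$. Fix $n$ and set $g_n=e^{-\varphi/n}$. We use the classical fact that $e^{-h}$ is completely monotonic whenever $h'$ is completely monotonic. We prove it by induction on derivatives: $g_n'=-(\varphi'/n)\,g_n$, and by the Leibniz rule $(-1)^k g_n^{(k)}$ is a sum of products of the nonnegative functions $(-1)^j(\varphi'/n)^{(j)}$ and $(-1)^{k-1-j}g_n^{(k-1-j)}$, hence nonnegative. By Lemma \ref{Bernstein theorem}, $g_n(s)=\int_0^\infty e^{-st}\,{\rm d}\alpha_n(t)$ with $\alpha_n$ bounded and non-decreasing. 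Since $g_n(0^+)=e^{-\varphi(0^+)/n}=1$, monotone convergence gives total mass $1$, so $\alpha_n=F_n$ is a probability distribution. Finally $g_n^n=\mathcal{L}$, and uniqueness of Laplace transforms gives $F=F_n^{*n}$. To ensure the support lies in $(0,\infty)$, or $[0,\infty)$, we observe that $F_n$ is supported where its Laplace transform is defined.

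The main obstacle is the limit passage in the necessity part, namely showing that $\varphi'$ inherits complete monotonicity from the $\psi_n'$. Our first approach is the L\'evy--Khintchine route: the measures $\mu_n=n\,{\rm d}F_n$ restricted to $(0,\infty)$ satisfy
$$\int_0^\infty \min(1,t)\,{\rm d}\mu_n(t)\le C$$
uniformly in $n$, which follows from boundedness of $\psi_n(1)$. Extracting a vaguely convergent subsequence yields
$$\varphi(s)=\beta s+\int_0^\infty(1-e^{-st})\,{\rm d}\mu(t),$$
and from this representation complete monotonicity of $\varphi'$ is immediate.
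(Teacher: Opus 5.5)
Your proof is correct and is essentially the classical argument from Feller (p.~450) that the paper cites instead of proving. Sufficiency comes from the complete monotonicity of $e^{-\varphi/n}$ plus Bernstein's theorem. Necessity comes from $n(1-\mathcal{L}^{1/n})\to\varphi$ together with closedness of completely monotonic functions under pointwise limits; you can skip the concavity step, since $\psi_n'=\mathcal{L}^{1/n}\varphi'\to\varphi'$ directly.

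Your closing remark on supports does not prove anything. Bernstein's theorem only yields measures on $[0,\infty)$, and an atom at $0$ genuinely occurs: $\varphi(s)=1-e^{-s}$ gives the Poisson law. So the lemma must be read with $[0,\infty)$ in place of $(0,\infty)$, as in Feller.
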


We next recall the definition of generalized gamma convolutions, a subclass of infinitely divisible distributions. A random variable $X$ supported on $(0,\infty)$ belongs to the class of generalized gamma convolutions if its Laplace transform, $\mathcal{L}(s)=\mathbb{E}[\exp(-sX)]$, admits the following representation
$$\mathcal{L}(s)=\exp\left(-as-\int_0^\infty \ln\left(1+\frac{s}{t}\right) {\rm d}\mu(t)\right),$$
where $s\geq 0$, $a\geq 0$, and $\mu$ is a nonnegative measure on $(0,\infty)$ satisfying the appropriate integrability conditions, see \cite[p. 29]{Bo92}. The following lemma establishes the connection between hyperbolically completely monotonic functions with generalized gamma convolutions, see \cite[p. 90]{Bo92}.

\begin{lemma}\label{hcm-ggc}
A function $\mathcal{L}$ on $(0, \infty)$ is the Laplace transform of a generalized gamma convolution if and only if $\mathcal{L}(0^{+}) = 1$ and $\mathcal{L}$ is a hyperbolically completely monotonic function.
\end{lemma}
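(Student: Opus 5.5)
The statement is Lemma \ref{hcm-ggc}: a function $\mathcal{L}$ on $(0,\infty)$ is the Laplace transform of a generalized gamma convolution if and only if $\mathcal{L}(0^{+})=1$ and $\mathcal{L}$ is hyperbolically completely monotonic. This is Bondesson's characterization, and I would follow his route. It rests on the Laplace transforms of the building blocks, the gamma laws $(1+s/t)^{-\beta}$, and on closure of the HCM class under products and limits.

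\textbf{Necessity.} First I check that $s\mapsto(1+s/t)^{-\beta}$ and $e^{-as}$ are HCM. With $s=uv$ and $s=u/v$,
\[
(1+uv/t)(1+u/(vt))=1+\frac{u^2}{t^2}+\frac{u}{t}\,w .
\]
This is affine in $w$ with positive slope, so its $(-\beta)$-th power is completely monotonic in $w$. Similarly $e^{-a u v}e^{-a u/v}=e^{-auw}$ is completely monotonic in $w$. Products of completely monotonic functions are completely monotonic, so finite products of these factors are HCM. A general generalized gamma convolution is a pointwise limit of such finite products, obtained by discretizing $\mu$. Pointwise limits of completely monotonic functions are completely monotonic by Bernstein's theorem (Lemma \ref{Bernstein theorem}) together with Helly selection. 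Finally, $\mathcal{L}(0^+)=1$ holds because $X$ is a probability law.

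\textbf{Sufficiency.} This is the main obstacle. Given an HCM $\mathcal{L}$ with $\mathcal{L}(0^+)=1$, I would follow Bondesson. First take $v\to1$ at fixed $u$: complete monotonicity in $w$ near $w=2$ forces $\mathcal{L}$ to be completely monotonic, hence the Laplace transform of a probability measure. Next, show that $\varphi=-\log\mathcal{L}$ has a derivative $\varphi'$ that is a Stieltjes function,
\[
\varphi'(s)=a+\int_0^\infty\frac{{\rm d}\mu(t)}{t+s},
\]
since integrating this gives exactly the generalized gamma convolution form.

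For this step, the function $w\mapsto\mathcal{L}(uv)\mathcal{L}(u/v)$ extends analytically in $w$ to $\mathbb{C}\setminus(-\infty,2]$. Varying $u$ then gives analyticity of $\mathcal{L}$ in the cut plane $\mathbb{C}\setminus(-\infty,0]$, with $\mathcal{L}$ having no zeros there. From this one shows that $\operatorname{Im}\varphi'(z)$ has constant sign in the upper half-plane, which is the Pick function characterization of Stieltjes functions. I expect this complex-analytic step to be the hardest part. In the paper itself I would cite \cite[p. 90]{Bo92} for it rather than reprove it.
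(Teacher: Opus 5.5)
Your necessity direction is correct, and it goes beyond the paper, which gives no argument and simply cites \cite[p.~90]{Bo92}. The identity $(1+uv/t)(1+u/(vt))=1+u^2/t^2+(u/t)w$ makes each gamma factor hyperbolically completely monotonic, $e^{-auv}e^{-au/v}=e^{-auw}$ handles the drift, and closure under products and pointwise limits finishes it.

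The sufficiency sketch, however, contains a step that fails. Letting $v\to1$ at fixed $u$ cannot force $\mathcal{L}$ to be completely monotonic. That step also never uses $\mathcal{L}(0^+)=1$, which is indispensable there. For example, $g(s)=e^{-1/s}$ satisfies $g(uv)g(u/v)=e^{-w/u}$, so it is hyperbolically completely monotonic, yet it is strictly increasing (and $g(0^+)=0$). Near $v=1$ one has $w-2\approx(\log v)^2$, so the expansion only yields conditions in the multiplicative variable. The first of them, $\frac{{\rm d}}{{\rm d}w}\big[\mathcal{L}(uv)\mathcal{L}(u/v)\big]\le 0$ at $w=2^+$, just says that $t\mapsto\log\mathcal{L}(e^t)$ is concave, which $e^{-1/s}$ satisfies.

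The limit you need is the opposite one: fix $s=uv$ and let $u\to0$, so $v\to\infty$ and $w=s/u+u/s$. Write $\mathcal{L}(uv)\mathcal{L}(u/v)=\int_0^\infty e^{-\lambda w}K_u({\rm d}\lambda)$ and put $\lambda=u\mu$. This gives $\mathcal{L}(s)\mathcal{L}(u^2/s)=\int_0^\infty e^{-\mu s-\mu u^2/s}\,\nu_u({\rm d}\mu)$. The left side tends to $\mathcal{L}(s)$ precisely because $\mathcal{L}(0^+)=1$. For every fixed $s'>0$ the masses $\int e^{-\mu s'}\nu_u({\rm d}\mu)$ stay bounded as $u\to0$, and the factor $e^{-\mu u^2/s}$ disappears in the limit. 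Helly's selection theorem then gives $\mathcal{L}(s)=\int_0^\infty e^{-\mu s}\nu({\rm d}\mu)$.

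Your later claim is also unjustified. You assert that $w\mapsto\mathcal{L}(uv)\mathcal{L}(u/v)$ continues analytically to $\mathbb{C}\setminus(-\infty,2]$, but this does not follow from complete monotonicity, which only gives holomorphy in $\operatorname{Re}w>2$. For instance, $\sum_{n\ge1}2^{-n}e^{-n(w-2)}$ is completely monotonic on $(2,\infty)$ but has poles at $w=2-\log 2+2\pi{\rm i}k$. For these particular products the continuation is true a posteriori, since for a generalized gamma convolution the product equals $\exp\big(-auw-\int_0^\infty\log(1+u^2/t^2+uw/t)\,{\rm d}\mu(t)\big)$. Deriving it, however, is the substance of Bondesson's theorem, so it cannot be used as an intermediate step, and the Pick-function step built on it is not yet an argument.

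Since you end up citing \cite[p.~90]{Bo92} for sufficiency, exactly as the paper does, either drop these two heuristic steps or replace the first by the $u\to0$ argument above. You would then cite Bondesson only for the Stieltjes property of $-(\log\mathcal{L})'$.
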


We conclude this section by recalling the definitions of Stieltjes functions, complete Bernstein functions, and Thorin–Bernstein functions. For further details and related results, we refer the reader to \cite{SSV12}.

A function $g : (0, \infty) \to [0, \infty)$ is said to be a {\it Stieltjes function} if it admits the following representation (see \cite[p. 16]{SSV12})
$$g(x) = a+\frac{b}{x}+\int_{0}^{\infty}\frac{1}{x+t} {\rm d}\mu(t), \quad \mbox{with} \quad \int_{0}^{\infty}\frac{1}{1+t} {\rm d}\mu(t)<\infty, \quad a, b \geq 0.$$
Moreover, a function $g: (0, \infty) \to [0, \infty)$ is said to be a {\it complete Bernstein function} if $g$ admits the following representation (see \cite[p. 69]{SSV12})
\begin{equation}\label{cbf definition}
g(x) = a+bx+\int_{0}^{\infty}(1-e^{-xt})m(t) {\rm d}t,\quad \mbox{with} \quad \int_{0}^{\infty}\min\{1, t\}m(t) {\rm d}t<\infty, \quad a, b \geq 0,
\end{equation}
and $m$ is a completely monotonic function on $(0, \infty)$. The following lemma relates complete Bernstein functions to Stieltjes functions, see \cite[Theorem 6.2]{SSV12} and \cite[Theorem 7.3]{SSV12} for more details.

\begin{lemma}\label{cbf-sf}
Let $g: (0,\infty) \to (0, \infty)$ be a non-trivial function. Then the following statements are equivalent.
\begin{enumerate}
\item[\bf a.] $t \mapsto g(t)$ is a complete Bernstein function on $(0,\infty)$.
\item[\bf b.] $t \mapsto {g(t)}/t$ is a Stieltjes function on $(0,\infty)$.
\item[\bf c.] $t \mapsto 1/g(t)$ is a Stieltjes function on $(0,\infty)$.
\end{enumerate}
\end{lemma}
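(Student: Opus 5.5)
The plan is to prove the equivalence \textbf{a}$\Leftrightarrow$\textbf{b} by direct manipulation of the integral representations. The equivalence \textbf{a}$\Leftrightarrow$\textbf{c} is not a matter of rewriting integrals, so for it I would pass to the complex-analytic (Nevanlinna--Pick) description of both classes.

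For \textbf{a}$\Rightarrow$\textbf{b}, start from the representation \eqref{cbf definition}. Since $m$ is completely monotonic, Lemma \ref{Bernstein theorem} gives $m(t)=\int_0^\infty e^{-ts}\,{\rm d}\sigma(s)$ for a nonnegative measure $\sigma$. If $m$ is not integrable at $0$, the bounded-measure version is replaced by the standard one with $\sigma$ locally finite. Insert this into the integral and use Tonelli, which is allowed because the integrand is nonnegative:
$$\int_0^\infty (1-e^{-xt})m(t)\,{\rm d}t=\int_0^\infty\left(\frac1s-\frac1{x+s}\right){\rm d}\sigma(s)=\int_0^\infty \frac{x}{s(x+s)}\,{\rm d}\sigma(s).$$
Dividing by $x$ gives
$$\frac{g(x)}{x}=b+\frac{a}{x}+\int_0^\infty\frac{1}{x+s}\,{\rm d}\mu(s),\qquad {\rm d}\mu(s)=\frac{{\rm d}\sigma(s)}{s}.$$
This has exactly the Stieltjes form. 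Next, check that the condition $\int_0^\infty\min\{1,t\}m(t)\,{\rm d}t<\infty$ translates into $\int_0^\infty(1+s)^{-1}{\rm d}\mu(s)<\infty$. This follows from $\int_0^\infty \min\{1,t\}e^{-ts}\,{\rm d}t\asymp 1/(s(1+s))$. For \textbf{b}$\Rightarrow$\textbf{a}, run the same computation backwards: put ${\rm d}\sigma=s\,{\rm d}\mu$ and $m(t)=\int e^{-ts}s\,{\rm d}\mu(s)$. This $m$ is completely monotonic by Lemma \ref{Bernstein theorem}, and the integrability condition transfers by the same two-sided estimate. Multiplying the Stieltjes representation of $g(x)/x$ by $x$ then gives \eqref{cbf definition}.

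For \textbf{a}$\Leftrightarrow$\textbf{c}, I would use that a Stieltjes function extends holomorphically to $\mathbb{C}\setminus(-\infty,0]$ with $\operatorname{Im} f(z)\le 0$ whenever $\operatorname{Im} z>0$. This is immediate from $\operatorname{Im}\,(z+t)^{-1}=-\operatorname{Im} z/|z+t|^2$. From the form obtained in \textbf{b}, $g(z)=z\cdot(g(z)/z)$ is then holomorphic there, and a similar computation with $z/(z+s)=1-s/(z+s)$ shows $\operatorname{Im} g(z)\ge0$ on the upper half-plane. Conversely, I need the characterization that a function nonnegative on $(0,\infty)$, holomorphic off $(-\infty,0]$ and with $\operatorname{Im} g\ge 0$ on the upper half-plane, is a complete Bernstein function. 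Given this, \textbf{a}$\Rightarrow$\textbf{c} follows: $1/g$ is holomorphic (since $g$ has no zeros off the negative axis, $g$ being non-trivial), positive on $(0,\infty)$, and maps the upper half-plane into the lower one. Hence $1/g$ is Stieltjes, and symmetrically \textbf{c}$\Rightarrow$\textbf{a}.

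The main obstacle is this converse characterization, namely recovering a Stieltjes integral from sign information on the imaginary part. I would obtain it from the Herglotz--Nevanlinna representation of Pick functions, $f(z)=\alpha+\beta z+\int_{\mathbb{R}}\left(\frac{1}{t-z}-\frac{t}{1+t^2}\right){\rm d}\nu(t)$. Analyticity across $(0,\infty)$, together with realness there, forces $\nu$ to be supported on $(-\infty,0]$. Positivity on $(0,\infty)$ then lets one absorb the constants and get $a,b\ge0$. Applying this to $-f$ for Stieltjes $f$ gives the Stieltjes representation with the boundary terms $a$ and $b/x$ coming from the linear part and the atom at $0$. This is the content of \cite[Theorems 6.2 and 7.3]{SSV12}, which I would cite rather than reprove in full.
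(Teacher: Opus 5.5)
Your proposal is correct and takes essentially the same route as the paper. The paper gives no argument of its own and cites \cite[Theorems 6.2 and 7.3]{SSV12}, whose proofs are your Tonelli computation for \textbf{a}$\Leftrightarrow$\textbf{b} and the Nevanlinna--Pick characterization for \textbf{a}$\Leftrightarrow$\textbf{c}. One small precision: the locally finite form of Bernstein's theorem is needed whenever $m(0^+)=\infty$, not only when $m$ fails to be integrable at $0$ (e.g.\ $m(t)=t^{-1/2}$), though your remedy already covers this.
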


Finally, a function $g: (0, \infty) \to [0, \infty)$ is said to be a {\it Thorin–Bernstein function} if $g$ admits the representation \eqref{cbf definition} with the additional assumption that the function $t\mapsto t\cdot m(t)$ is completely monotonic on $(0,\infty)$. Since the complete monotonicity of $t\mapsto t\cdot m(t)$ implies that of $t\mapsto m(t)$, every Thorin–Bernstein function is a complete Bernstein function.

\section{\bf NBU and NWU property of the $K$-distribution}
\setcounter{equation}{0}

In the following theorem, by assuming $a\geq b$ and $b\leq 1$, we prove that the probability density function $f_{a,b,\alpha}(x)$ of the $K$-distribution is log-convex on $(0,\infty)$. Consequently, in view of Lemma \ref{concave_theorem_f} and \ref{NBU_theorem_S}, the $K$-distribution possesses the NWU property.

\begin{theorem}\label{logconvex of pdf}
If $a\geq b,$ $b\leq 1$ and $\alpha>0,$ then the probability density function of the $K$-distribution $x \mapsto f_{a,b,\alpha}(x)$ is log-convex on $(0,\infty)$. Consequently, the corresponding survival function is log-convex on $(0,\infty)$, and hence the $K$-distribution possesses the NWU property.
\end{theorem}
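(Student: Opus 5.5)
The plan is to avoid differentiating the Bessel function and instead write $f_{a,b,\alpha}$ as the product of a power of $x$ and a Laplace transform in $x$ of a positive measure. Each factor will then be log-convex when $b\leq 1$.

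Set $\nu=a-b\geq 0$ and $c=2\sqrt{ab/\alpha}$. I would use the evenness $K_{\nu}=K_{-\nu}$ together with the classical integral representation
$$K_{\mu}(z)=\frac12\left(\frac z2\right)^{\mu}\int_0^\infty u^{-\mu-1}\exp\left(-u-\frac{z^2}{4u}\right){\rm d}u,\qquad z>0,$$
applied with $\mu=-\nu$ and $z=c\sqrt{x}$. Then $z^2/4=c^2x/4$ and $(z/2)^{-\nu}$ contributes the factor $x^{-\nu/2}$. Combined with $x^{\frac{a+b}{2}-1}$, this gives the power $x^{\frac{a+b}{2}-1-\frac{a-b}{2}}=x^{b-1}$. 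Hence
$$f_{a,b,\alpha}(x)=C\,x^{b-1}\int_0^\infty u^{\nu-1}e^{-u}\,e^{-\frac{c^2}{4u}x}\,{\rm d}u ,$$
with an explicit constant $C>0$. The integral converges for every $x>0$ because $\nu\geq 0$; the case $\nu=0$ is fine since the factor $e^{-c^2x/(4u)}$ kills the singularity at $u=0$. I would double-check this constant bookkeeping first, since it is the step most likely to hide an error.

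Next I would prove that each factor is log-convex on $(0,\infty)$. The function $x\mapsto (b-1)\log x$ is convex because $b\leq 1$. For the second factor, substituting $s=c^2/(4u)$ writes it as $\int_0^\infty e^{-xs}\,{\rm d}\rho(s)$ for a nonnegative measure $\rho$. By H\"older's inequality, any such Laplace transform satisfies
$$\int e^{-(\lambda x+(1-\lambda)y)s}{\rm d}\rho\leq\left(\int e^{-xs}{\rm d}\rho\right)^{\lambda}\left(\int e^{-ys}{\rm d}\rho\right)^{1-\lambda},$$
so it is log-convex; equivalently, it is completely monotonic by Lemma \ref{Bernstein theorem}. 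A product of log-convex functions is log-convex, so $f_{a,b,\alpha}$ is log-convex on $(0,\infty)$. This argument uses only $b\leq1$. The hypothesis $a\geq b$ serves only to fix the labelling $\nu=a-b\geq0$, and by the symmetry of $f_{a,b,\alpha}$ in $(a,b)$ it is harmless.

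For the consequences, I would check the hypothesis of Lemma \ref{concave_theorem_f}\,\textbf{c}, namely $f_{a,b,\alpha}(x)\to0$ as $x\to\infty$. This follows from $K_\nu(z)\sim\sqrt{\pi/(2z)}e^{-z}$, which makes the exponential decay $e^{-c\sqrt x}$ beat any power of $x$. It also follows directly from the integral above by dominated convergence. Lemma \ref{concave_theorem_f}\,\textbf{c} then gives log-convexity of $S_{a,b,\alpha}$ on $(0,\infty)$, and Lemma \ref{NBU_theorem_S}\,\textbf{b} yields the NWU property. Continuous differentiability of $f_{a,b,\alpha}$, which Lemma \ref{concave_theorem_f} requires, is clear from the smoothness of $K_\nu$ on $(0,\infty)$.
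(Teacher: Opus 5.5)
Your proof is correct, and it takes a genuinely different route from the paper. The paper computes $(\log f_{a,b,\alpha})'$ and uses the recurrence $K_\mu'=-K_{\mu-1}-(\mu/x)K_\mu$ to write it as $(b-1)/x$ minus a positive multiple of $y^{-1/2}K_{a-b-1}(\sqrt{y})/K_{a-b}(\sqrt{y})$, where $y=4abx/\alpha$. It then invokes Ismail's Stieltjes-transform representation of this Bessel quotient, which is valid only for order $\mu=a-b\geq 0$; this is exactly where $a\geq b$ enters. One more differentiation gives $(\log f_{a,b,\alpha})''>0$.

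You instead use the elementary integral representation \eqref{integral representation for kmu}, which the paper uses only later (in Theorem \ref{a<1_NWU} and Section 5). This lets you factor $f_{a,b,\alpha}(x)=Cx^{b-1}\int_0^\infty e^{-xs}\,{\rm d}\rho(s)$ with $C=(ab/\alpha)^b/(\Gamma(a)\Gamma(b))$, and log-convexity then follows from H\"older. Your route avoids the deep Stieltjes representation and is more general. Since $e^{-u}$ controls $u\to\infty$ and $e^{-c^2x/(4u)}$ controls $u\to 0$, the integral converges for every real $\nu$. So your argument needs only $b\leq 1$, with no appeal to symmetry. It even shows that $f_{a,b,\alpha}$ is completely monotonic; it is the density-level counterpart of the representation behind Theorem \ref{consequence of integral}.

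The paper's route gives a stronger conclusion under its hypotheses. There $-(\log f_{a,b,\alpha})'$ equals $(1-b)/x$ plus a Stieltjes function, so $f_{a,b,\alpha}$ is logarithmically completely monotonic, which implies both log-convexity and complete monotonicity.

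One minor point: when $a=b$ the measure $\rho$ has infinite total mass, since $\int_0^\infty u^{-1}e^{-u}\,{\rm d}u=\infty$. So your aside calling the Laplace transform completely monotonic ``by Lemma \ref{Bernstein theorem}'' is not literally covered by that lemma, which assumes bounded $\alpha$. The H\"older argument you actually rely on does not need $\rho$ to be finite.
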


\begin{proof}[\bf Proof of Theorem \ref{logconvex of pdf}]
Observe that
\begin{equation*}
\frac{f'_{a, b, \alpha}(x)}{f_{a, b, \alpha}(x)} = \left(\frac{a+b}{2}-1\right)\frac{1}{x}+\sqrt{\frac{ab}{\alpha x}}\frac{K'_{a-b}\left(2\sqrt{abx/\alpha}\right)}{K_{a-b}\left(2\sqrt{abx/\alpha}\right)}.
\end{equation*}
By using the recurrence relation (see \cite[Eq. 10.29.2]{NIST10})
\begin{equation}\label{recurrence Bessel}
K'_{\mu}(x) = -K_{\mu-1}(x)-\frac{\mu}{x}K_{\mu}(x),
\end{equation}
we arrive at
\begin{align*}
\frac{f'_{a, b, \alpha}(x)}{f_{a, b, \alpha}(x)} &= \frac{b-1}{x}-\sqrt{\frac{ab}{\alpha x}}\frac{K_{a-b-1}\left(2\sqrt{abx/\alpha}\right)}{K_{a-b}\left(2\sqrt{abx/\alpha}\right)}\\
&=\frac{b-1}{x}- 2\left(\frac{ab}{\alpha}\right)\left(4abx/\alpha\right)^{-1/2}\frac{K_{a-b-1}\left(\sqrt{4abx/\alpha}\right)}{K_{a-b}\left(\sqrt{4abx/\alpha}\right)}.
\end{align*}
Note that the ratio of the modified Bessel functions of the second kind has the following Stieltjes transform representation (see \cite[Eq. 1.3]{Is77})
\begin{equation}
x^{-1/2}\frac{K_{\mu-1}(\sqrt{x})}{K_{\mu}(\sqrt{x})} = \frac{2}{\pi^2}\int_{0}^{\infty}\frac{t^{-1}}{x+t}[J^2_{\mu}(\sqrt{t})+Y^2_{\mu}(\sqrt{t})]^{-1} {\rm d}t,\quad x>0, \quad \mu\geq 0,
\end{equation}
where $J_{\mu}$ and $Y_{\mu}$ are the Bessel functions of first and second kind of order $\mu$, respectively. By using this representation, when $a \geq b$, we deduce that
\begin{equation*}
\frac{f'_{a, b, \alpha}(x)}{f_{a, b, \alpha}(x)} = \frac{b-1}{x}-\frac{4ab}{\pi^2}\int_{0}^{\infty}\frac{t^{-1}}{4abx+\alpha t}[J^2_{a-b}(\sqrt{t})+Y^2_{a-b}(\sqrt{t})]^{-1} {\rm d}t.
\end{equation*}
Consequently, we have that
\begin{equation*}
\left(\frac{f'_{a, b, \alpha}(x)}{f_{a, b, \alpha}(x)}\right)' = \frac{1-b}{x^2}+\frac{(4ab)^2}{\pi^2}\int_{0}^{\infty}\frac{t^{-1}}{(4abx+\alpha t)^2}[J^2_{a-b}(\sqrt{t})+Y^2_{a-b}(\sqrt{t})]^{-1} {\rm d}t>0.
\end{equation*}
Hence, $x \mapsto f_{a, b, \alpha}(x)$ is log-convex on $(0, \infty)$. By using the relation (see \cite[eq. 10.25.3]{NIST10})
\begin{equation}\label{asymptotic of kmu at infinity}
K_{\mu}(x) \sim \sqrt{\pi/(2x)}e^{-x} \quad \mbox{as} \quad x \to \infty,
\end{equation}
we can verify that $\lim_{x\to \infty} f_{a, b, \alpha}(x)=0$. Consequently, in view of Lemma \ref{concave_theorem_f}, we conclude that the survival function of the $K$-distribution is also log-convex. Now, in view of Lemma \ref{NBU_theorem_S}, we obtain that the $K$-distribution (gamma-gamma distribution) has the NWU property.
\end{proof}

Let $X$ be a random variable supported on $(0,\infty)$. The following remark gives the probability density function of the random variable $Y_t= \{X-t \mid X>t\} $, where $t>0$. This result will be useful in the proof of the forthcoming theorem.

\begin{remark}\label{remark_conditional_rv}
{\em Let $X$ be a random variable supported on $(0,\infty)$, with probability density function $f$, cumulative distribution function $F$ and survival function $S$. Let $Y_t = \{ X-t \mid X>t \}$, for all $t>0$. The cumulative distribution function of the random variable $Y$ can be derived as follows:
$${\rm P}(Y_t<x) = \frac{{\rm P}(X-t<x, X>t)}{{\rm P}(X>t)}= \frac{{\rm P}(t<X<x+t)}{{\rm P}(X>t)}=\frac{F(x+t)-F(t)}{S(t)}.$$
Consequently, the probability density function of the random variable $Y_t$ is $f(x+t)/S(t)$ which can be obtained by differentiating $(F(x+t)-F(t))/S(t)$ with respect to $x$.}
\end{remark}

By using Lemma \ref{st_NWU}, the following theorem establishes that the $K$-distribution possesses the NWU property without assuming $a\geq b$. It is worth mentioning that this result does not rely on log-concavity or log-convexity arguments.

\begin{theorem}\label{a<1_NWU}
If $\min\{a, b\}\leq 1$ and $\alpha >0,$ then the $K$-distribution possesses the NWU property.
\end{theorem}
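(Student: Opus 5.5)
The plan is to use the characterization of Lemma \ref{st_NWU}: I would show that $X\leq_{\rm st} Y_t=\{X-t\mid X>t\}$ for every $t>0$, where $X$ follows the $K$-distribution. To get the stochastic order I would pass through the likelihood ratio order and then invoke \eqref{lr_st}.

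By Remark \ref{remark_conditional_rv}, $Y_t$ has density $f_{a,b,\alpha}(x+t)/S_{a,b,\alpha}(t)$ on $(0,\infty)$, which is the same support as $X$. So $X\leq_{\rm lr}Y_t$ amounts to showing that
$$x\mapsto \frac{f_{a,b,\alpha}(x+t)}{S_{a,b,\alpha}(t)\,f_{a,b,\alpha}(x)}$$
is increasing on $(0,\infty)$ for each fixed $t>0$. Since $S_{a,b,\alpha}(t)$ does not depend on $x$, this reduces to $x\mapsto \log f_{a,b,\alpha}(x+t)-\log f_{a,b,\alpha}(x)$ being increasing. That holds as soon as $(\log f_{a,b,\alpha})'$ is increasing, i.e.\ as soon as $f_{a,b,\alpha}$ is log-convex.

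The key observation for removing the hypothesis $a\geq b$ is that the density \eqref{pdf-k} is symmetric in $(a,b)$. The prefactor $(ab)^{(a+b)/2}/(\Gamma(a)\Gamma(b)\alpha^{(a+b)/2})$ and the power $x^{(a+b)/2-1}$ are symmetric, and $K_{a-b}=K_{b-a}$ because $K_{-\nu}=K_\nu$. This is consistent with $X$ being a product of two independent gamma variables, whose roles can be interchanged. Hence $f_{a,b,\alpha}=f_{b,a,\alpha}$, and under $\min\{a,b\}\leq1$ I may relabel the parameters so that $b=\min\{a,b\}\leq 1$ and $a\geq b$. The computation in the proof of Theorem \ref{logconvex of pdf} then applies verbatim. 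The Stieltjes representation of $K_{\mu-1}/K_\mu$ is used with $\mu=a-b\geq0$, and it yields
$$\left(\frac{f'_{a,b,\alpha}}{f_{a,b,\alpha}}\right)'(x)=\frac{1-b}{x^2}+(\text{positive integral})>0.$$
So $f_{a,b,\alpha}$ is log-convex, the ratio above is increasing, and $X\leq_{\rm lr}Y_t$. Then \eqref{lr_st} gives $X\leq_{\rm st}Y_t$ for all $t>0$, and Lemma \ref{st_NWU} gives the NWU property.

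The only delicate point I expect is justifying the symmetry step carefully: $K_{-\nu}=K_\nu$, and the recurrence \eqref{recurrence Bessel} together with Ismail's representation are applied only for the nonnegative order $a-b$ after relabeling. If one prefers to avoid relabeling, the same argument can be run with $\mu=b-a$ using the recurrence $K'_\mu=K_{\mu+1}-\dots$ rewritten accordingly, and the symmetry makes the two cases identical. One could also bypass the likelihood ratio step and deduce NWU from Lemmas \ref{concave_theorem_f} and \ref{NBU_theorem_S} once log-convexity is known. I would nonetheless present the stochastic-order route, as the paper announces.
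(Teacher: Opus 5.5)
Your proof is correct, but its key step differs from the paper's. Both arguments use Lemma \ref{st_NWU} and \eqref{lr_st}, which reduces the problem to showing that $G(x)=f_{a,b,\alpha}(x+t)/(f_{a,b,\alpha}(x)S_{a,b,\alpha}(t))$ is increasing.

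The paper proves this directly. Via \eqref{integral representation for kmu} it writes $f_{a,b,\alpha}(x)$ as a constant times $x^{a-1}\int_0^\infty s^{b-a-1}e^{-s-abx/(\alpha s)}\,{\rm d}s$. Then $(\log G)'$ splits into $(1-a)t/(x(x+t))\geq 0$ plus $\frac{ab}{\alpha}[r(x)-r(x+t)]$, and $r$ is shown to be decreasing by Cauchy--Schwarz. This works for $a\leq 1$ and every $b>0$, and symmetry handles $b\leq 1$.

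You instead get the monotonicity of $G$ from the log-convexity in Theorem \ref{logconvex of pdf}. The relabeling via $f_{a,b,\alpha}=f_{b,a,\alpha}$ is legitimate, and it places you in the case $a\geq b$, $b=\min\{a,b\}\leq 1$.

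What your route buys is brevity and a sharper view of how the results fit together. Theorem \ref{a<1_NWU} is really a corollary of Theorem \ref{logconvex of pdf} plus symmetry, so the hypothesis $a\geq b$ there was never a genuine restriction. As you note, once log-convexity is known, the stochastic-order step can also be replaced by Lemmas \ref{concave_theorem_f} and \ref{NBU_theorem_S}. The price is that you depend on Ismail's Stieltjes representation of $K_{\mu-1}/K_\mu$, a deep result. The paper's argument uses only the elementary integral representation and Cauchy--Schwarz.

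Conversely, $G$ increasing for every $t>0$ is equivalent to convexity of $\log f_{a,b,\alpha}$. So the paper's proof is in effect a second, elementary proof of log-convexity of the density: $r$ decreasing is exactly log-convexity of the Laplace-transform factor, multiplied by the log-convex $x^{a-1}$. This holds despite the paper's remark that the argument avoids log-convexity.

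A minor point concerns your optional aside: the relevant recurrence is $K'_\mu=-K_{\mu+1}+\frac{\mu}{x}K_\mu$, not $K'_\mu=K_{\mu+1}-\dots$. Your main argument does not use it.
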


\begin{proof}[\bf Proof of Theorem \ref{a<1_NWU}]
First, we consider the case $a \leq 1$. Let $X$ follows the $K$-distribution with probability density function $f_{a, b, \alpha}$ and survival function $S_{a, b, \alpha}$. In view of Remark \ref{remark_conditional_rv}, the probability density function of the random variable $Y_t = \{X-t|X>t\}$ is $x\mapsto f_{a, b, \alpha}(x+t)/S_{a, b, \alpha}(t)$, where $t>0$. In view of Lemma \ref{st_NWU} and implication \eqref{lr_st}, in order to prove that the $K$-distribution has the NWU property it suffices to show that 
$$x \mapsto G(x) = \frac{f_{a, b, \alpha}(x+t)}{f_{a, b, \alpha}(x)S_{a, b, \alpha}(t)}$$
is increasing on $(0, \infty)$. We can verify that
$$G(x) = \frac{1}{S_{a,b,\alpha}(t)}\left(\frac{x+t}{x}\right)^{\frac{a+b}{2}-1}
\frac{K_{a-b}\left(2\sqrt{\frac{ab}{\alpha}(x+t)}\right)}{K_{a-b}\left(2\sqrt{\frac{ab}{\alpha}x}\right)}.$$ 
By using the following integral representation (see \cite[eq. 10.32.10]{NIST10})
\begin{equation}\label{integral representation for kmu}
K_{\nu}(2\sqrt{z}) = \frac{1}{2}z^{\frac{\nu}{2}}\int_{0}^{\infty}s^{-\nu-1}e^{-s-{\frac{z}{s}}} {\rm d}s,
\end{equation}
we arrive at 
$$G(x) = \frac{1}{S_{a, b, \alpha}(t)}\left(\frac{x+t}{x}\right)^{a-1}\frac{\displaystyle\int_{0}^{\infty}s^{b-a-1}e^{-s-\frac{ab}{\alpha s}(x+t)}{\rm d}s}{\displaystyle\int_{0}^{\infty}s^{b-a-1}e^{-s-\frac{ab}{\alpha s}x}{\rm d}s}.$$ 
Now, logarithmic differentiation leads to
$$\frac{G'(x)}{G(x)} = \frac{(1-a)t}{x(x+t)}+\frac{ab}{\alpha}\left[\frac{\displaystyle\int_{0}^{\infty}s^{b-a-2}e^{-s-\frac{ab}{\alpha s}x}{\rm d}s}{\displaystyle\int_{0}^{\infty}s^{b-a-1}e^{-s-\frac{ab}{\alpha s}x}{\rm d}s}-\frac{\displaystyle\int_{0}^{\infty}s^{b-a-2}e^{-s-\frac{ab}{\alpha s}(x+t)}{\rm d}s}{\displaystyle\int_{0}^{\infty}s^{b-a-1}e^{-s-\frac{ab}{\alpha s}(x+t)}{\rm d}s}\right].$$
and for $a\leq 1$, in order to show $G'(x)>0$ it is sufficient to prove the function 
$$x \mapsto r(x) = \frac{\displaystyle\int_{0}^{\infty}s^{b-a-2}e^{-s-\frac{ab}{\alpha s}x}{\rm d}s}{\displaystyle\int_{0}^{\infty}s^{b-a-1}e^{-s-\frac{ab}{\alpha s}x}{\rm d}s}$$ is decreasing on $(0, \infty)$. Note that the sign of derivative of $r(x)$ is same as the sign of the expression
\begin{equation}
\frac{ab}{\alpha}\left[\left(\int_{0}^{\infty}s^{b-a-2}e^{-s-\frac{ab}{\alpha s}x}{\rm d}s\right)^2-\int_{0}^{\infty}s^{b-a-3}e^{-s-\frac{ab}{\alpha s}x}{\rm d}s \cdot \int_{0}^{\infty}s^{b-a-1}e^{-s-\frac{ab}{\alpha s}x}{\rm d}s\right].
\end{equation}
But, by using the Cauchy-Schwarz inequality for integrals, we clearly have that 
\begin{align*}
\left(\int_{0}^{\infty}s^{b-a-2}e^{-s-\frac{ab}{\alpha s}x}{\rm d}s\right)^2 &= \left(\int_{0}^{\infty}s^{\frac{a-b-3}{2}}e^{-\frac{s}{2}}e^{-\frac{ab}{2\alpha s}x}\cdot s^{\frac{a-b-1}{2}}e^{-\frac{s}{2}}e^{-\frac{ab}{2\alpha s}x}{\rm d}s\right)^2\\
&\leq \int_{0}^{\infty}s^{b-a-3}e^{-s-\frac{ab}{\alpha s}x}{\rm d}s \cdot \int_{0}^{\infty}s^{b-a-1}e^{-s-\frac{ab}{\alpha s}x}{\rm d}s, 
\end{align*}
which implies that $x \mapsto r(x)$ is decreasing on $(0, \infty)$. Consequently, for $a \leq 1$ we have that $x \mapsto G(x)$ is increasing on $(0, \infty)$. Thus, we arrive at the desired result.

Now, consider the case when $b \leq 1$ and $b < a$. Here, by using the relation $K_{-\nu}(x) = K_{\nu}(x)$ (see \cite[eq. 10.27.3]{NIST10}), we can verify that $f_{a, b, \alpha}(x) = f_{b, a, \alpha}(x)$ and $S_{a, b, \alpha}(x) = S_{b, a, \alpha}(x)$. Thus, the proof for the case $b\leq 1$ follows immediately by symmetry.
\end{proof}

It follows from Theorem \ref{a<1_NWU} that the $K$-distribution possesses the NWU property whenever $a\leq1$ or $b\leq1$. Thus, the remaining parameter region is $a,$ $b>1$. For this parameter region, we prove in the forthcoming theorem that the $K$-distribution possesses neither the NBU nor the NWU property. The following remark plays an important role in establishing this result.

\begin{remark}\label{mean of gamma gamma}
{\em Let $X$ be the random variable which follows the $K$-distribution. Then $$\mathbb{E}[X] = \frac{2}{\Gamma(a)\Gamma(b)}\left(\frac{ab}{\alpha}\right)^{\frac{a+b}{2}}\displaystyle\int_{0}^{\infty}x^{\frac{a+b}{2}}K_{a-b}\left(2\sqrt{\frac{ab}{\alpha}x}\right){\rm d}x.$$ By using the transformation $u = 2\sqrt{{abx}/{\alpha}}$, we arrive at
$$\mathbb{E}[X] = \frac{\alpha}{2^{a+b}\Gamma(a+1)\Gamma(b+1)}\int_{0}^{\infty}u^{a+b+1}K_{a-b}(u){\rm d}u.$$
By using the relation \cite[eq. 10.43.19]{NIST10}
$$\int_{0}^{\infty}t^{\mu-1}K_{\nu}(t){\rm d}t = 2^{\nu-2}\Gamma\left(\frac{1}{2}\mu-\frac{1}{2}\nu\right)\Gamma\left(\frac{1}{2}\mu+\frac{1}{2}\nu\right),$$ we conclude that $\mathbb{E}[X] = \alpha.$}
\end{remark}

In the following theorem, we prove that for $a,$ $b>1$ the $K$-distribution possesses neither the NWU nor the NBU property. The proof proceeds by contradiction and relies on the fact that if $X\leq_{st}Y$, then $\mathbb{E}[X]\leq\mathbb{E}[Y]$ (see Remark \ref{remark on expectation}).

\begin{theorem}\label{a>1_NWU}
If $a,$ $b > 1$ and $\alpha >0,$ then the $K$-distribution neither possesses the NWU property nor the NBU property.
\end{theorem}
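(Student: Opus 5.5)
Assume $a,b>1$ and argue by contradiction, using Lemma \ref{st_NWU} together with Remark \ref{remark on expectation} and Remark \ref{mean of gamma gamma}. Lemma \ref{st_NWU} says that NWU (resp. NBU) is equivalent to $X\le_{\rm st}Y_t$ (resp. $X\ge_{\rm st}Y_t$) for every $t>0$, where $Y_t=\{X-t\mid X>t\}$. Taking expectations, NWU would force $\mathbb{E}[Y_t]\ge \mathbb{E}[X]=\alpha$ for all $t>0$, and NBU would force $\mathbb{E}[Y_t]\le\alpha$ for all $t>0$. Note that $\mathbb{E}[Y_t]=\int_t^\infty S_{a,b,\alpha}(u)\,{\rm d}u/S_{a,b,\alpha}(t)$ is the mean residual life $m(t)$, and $m(0^+)=\alpha$. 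So it suffices to exhibit one value of $t$ with $m(t)<\alpha$ and another with $m(t)>\alpha$.

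\textbf{Behaviour as $t\to\infty$.} By \eqref{asymptotic of kmu at infinity}, $f_{a,b,\alpha}(x)\sim C x^{\frac{a+b}{2}-\frac34}e^{-2\sqrt{abx/\alpha}}$. The tail integral $S_{a,b,\alpha}(t)=\int_t^\infty f_{a,b,\alpha}$ is then computed by Laplace-type asymptotics (substitute $x=u^2$), giving $S(t)\sim C' t^{\frac{a+b}{2}-\frac14}e^{-2\sqrt{abt/\alpha}}$. Repeating this once more yields
\[
m(t)\sim \sqrt{\alpha t/(ab)}\to\infty .
\]
Hence $m(t)>\alpha$ for large $t$, which rules out NBU. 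This part is routine.

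\textbf{Behaviour near $t=0$.} Since $m(0^+)=\alpha$, I would show $m'(0^+)<0$. From $m'(t)=m(t)h(t)-1$ with $h=f/S$, the sign of $m'(t)$ near $0$ is determined by $\alpha f_{a,b,\alpha}(0^+)-1$. Using $K_\nu(z)\sim \frac12\Gamma(|\nu|)(z/2)^{-|\nu|}$ for $\nu\ne0$ (and the logarithmic behaviour for $\nu=0$), one finds $f_{a,b,\alpha}(x)\sim c\,x^{\min\{a,b\}-1}$. For $a,b>1$ this tends to $0$, so $f(0^+)=0$. Therefore $m'(0^+)=-1<0$, and $m(t)<\alpha$ for small $t>0$. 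This rules out NWU.

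\textbf{Main obstacle.} The delicate point is making the limit $m'(0^+)=\alpha f(0^+)-1$ rigorous: one must justify differentiability of $m$ at $0$ and its continuity there, which is straightforward since $S(0)=1$ and $\int_0^\infty S=\alpha$. A cleaner alternative avoids derivatives. NWU with $t\to0$ would give $\int_t^\infty S\ge \alpha S(t)$. Since $\int_t^\infty S=\alpha-\int_0^t S$, this is equivalent to $\int_0^t S\le \alpha F(t)$. But $\int_0^t S\approx t$ while $F(t)=O(t^{\min\{a,b\}})=o(t)$, which is a contradiction. I would present this version, since it needs only the small-$x$ behaviour of $K_\nu$ from \cite{NIST10}.
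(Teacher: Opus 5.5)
Your proposal is correct and follows essentially the same route as the paper: you compare the mean residual life $m(t)=\mathbb{E}[Y_t]$ with $m(0)=\mathbb{E}[X]=\alpha$, use $f_{a,b,\alpha}(0^+)=0$ for $\min\{a,b\}>1$ (so $m'(0^+)=-1$) to rule out NWU, and use $m(t)\to\infty$ to rule out NBU; your derivative-free variant $\int_0^t S\le \alpha F(t)$ is just the integrated NWU inequality, and it is a tidy way to make the small-$t$ step rigorous. There are two harmless slips: the tails are $f_{a,b,\alpha}(x)\sim C x^{\frac{a+b}{2}-\frac54}e^{-2\sqrt{abx/\alpha}}$ and $S_{a,b,\alpha}(t)\sim C' t^{\frac{a+b}{2}-\frac34}e^{-2\sqrt{abt/\alpha}}$, which still give $m(t)\sim\sqrt{\alpha t/(ab)}$; and for $a=b$ one has $F(t)\sim C t^{a}\log(1/t)$ rather than $O(t^{a})$, which is still $o(t)$.
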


\begin{proof}[\bf Proof of Theorem \ref{a>1_NWU}]
Let $X$ be a random variable following the $K$-distribution and $Y_t=\{X-t\mid X>t\}$. In view of Lemma \ref{st_NWU}, to prove that $X$ possesses neither the NWU nor the NBU property, it suffices to show that neither of the stochastic orders $X\leq_{st}Y_t$ nor $Y_t\leq_{st}X$ hold. In view of Remark \ref{remark on expectation}, we show that there exist values of $t$ for which $\mathbb{E}[X]<\mathbb{E}[Y_t]$, as well as values of $t$ for which $\mathbb{E}[X]>\mathbb{E}[Y_t]$ hold. In view of \cite[p. 34]{UNS13}, we can write
\begin{equation}\label{definition of m}
m(t)=\mathbb{E}[Y_t]=\frac{1}{S_{a, b, \alpha}(t)}\int_{t}^{\infty}S_{a, b, \alpha}(u){\rm d}u.
\end{equation}
Note that $m(0) = \displaystyle\int_{0}^{\infty}S_{a, b, \alpha}(u){\rm d}u = \mathbb{E}[X]$ and $\mathbb{E}[X] = \alpha$ (see Remark \ref{mean of gamma gamma}). Moreover, we can verify that $$m'(t) = -1+\frac{f_{a, b, \alpha}(t)}{S_{a, b, \alpha}(t)}m(t).$$ In view of the asymptotic relations \cite[eq. 10.30.2 \& eq. 10.30.3]{NIST10}
\begin{equation}\label{asymptotic kmu at zero}
K_{\nu}(z) \sim
\begin{cases}
\displaystyle \frac{1}{2}\Gamma(\nu)\left(\frac{z}{2}\right)^{-\nu},
& \nu>0,\\[8pt]
-\ln z, & \nu=0,
\end{cases}
\qquad \mbox{as } z\to 0,
\end{equation}
 when $a>1$, we have $\lim_{t \to 0}f_{a, b, \alpha}(t) = 0$. Consequently, $\lim_{t \to 0}m'(t) = -1$. Therefore, for sufficiently small $\epsilon>0$, we have $m(\epsilon)<m(0)$. Since $m(t)=\mathbb{E}[Y_t]$ and $m(0)=\mathbb{E}[X]$, this is equivalent to $\mathbb{E}[Y_{\epsilon}]<\mathbb{E}[X]$.

On the other hand, by repeated applications of Bernouli-Hospital's rule, we arrive at $$\lim_{t\to\infty}m(t)=\lim_{t\to\infty}-\frac{f(t)}{f'(t)}.$$ Thus, by using the asymptotic relation \eqref{asymptotic of kmu at infinity} and the recurrence relation \eqref{recurrence Bessel}, we have that $\lim_{t\to\infty}m(t)=\infty$. Consequently, there exists a sufficiently large $\epsilon_1>0$ such that $m(\epsilon_1)>m(0)$. That is, $\mathbb{E}[Y_{\epsilon_1}]<\mathbb{E}[X]$.
\end{proof}

The next alternative proof of Theorems \ref{a<1_NWU} and \ref{a>1_NWU} establishes a stronger result than those stated in the theorems. Specifically, we provide a complete characterization of the NBU and NWU properties of the $K$-distribution.

\begin{proof}[\bf Alternative proof of Theorems \ref{a<1_NWU} and \ref{a>1_NWU}]
This alternative proof is based on the fact that the $K$-distribution is derived as a product of two independent gamma distribution. Let $U$ and $V$ be two independent random variables such that $U$ follows a gamma distribution with shape parameter $a$ and scale
parameter $\lambda_1$, while $V$ follows a gamma distribution with shape parameter $b$ and scale parameter $\lambda_2$. Their respective probability density functions are given by
\begin{equation*}
f_{U}(x) = \frac{\lambda_1^a}{\Gamma(a)}x^{a-1}e^{-\lambda_1 x}\quad \mbox{and} \quad f_{V}(x) = \frac{\lambda_2^b}{\Gamma(b)}x^{b-1}e^{-\lambda_2 x},
\end{equation*}
where $a,$ $b,$ $\lambda_1,$ $\lambda_2,$ $x>0$. If we consider the product $X = UV$, then with $c = \lambda_1  \lambda_2$ the product density formula (see for example \cite[eq. 4]{ST66}) and the standard integral representation of the modified Bessel function of the second kind \eqref{integral representation for kmu} imply that
\begin{equation*}
f_{X}(x) = \frac{2c^{\frac{a+b}{2}}}{\Gamma(a)\Gamma(b)}x^{\frac{a+b}{2}-1}K_{a-b}(2\sqrt{cx}),
\end{equation*}
which is exactly the density function of $K$-distribution for $c={ab}/{\alpha}$. Thus, we have $f_{X}(x) = f_{a, b, \alpha}(x)$. Now, consider the case when $a\in (0, 1]$. The gamma density of the random variable $U$ can be written as
$$f_{U}(x) =\frac{\lambda_1^a}{\Gamma(a)} x^{-(1-a)} e^{-\lambda_1 x}.$$
Note that in the above expression both factors containing $x$ are completely monotonic on $(0, \infty)$ and so is their product. It follows that the survival function $$S_{U}(x)=\int_{x}^{\infty} f_{U}(t) {\rm d}t$$ is also completely monotone on $(0, \infty)$ for all $\lambda_1>0$ and $a \in (0, 1]$. Thus, by using Bernstein's theorem (see Lemma \ref{Bernstein theorem}), we have that
\begin{equation*}
    S_{U}(x) = \int_{0}^{\infty}e^{-xs} {\rm d}\mu(s),
\end{equation*}
for a probability measure $\mu$. Now, observe that the survival function $S_{a, b, \alpha}$ of the positive random variable $X$ can be written as $$S_{a, b, \alpha}(x) = \mathbb{E}[S_{U}\left(X/Y\right)].$$ Indeed, for $U$ and $V$ independent and positive random variables, $$S_{a, b, \alpha}(x) = P(X>x) = P(UV >x).$$ Using the law of total probability (conditional expectation) we obtain that $$P(U V >x) =\mathbb{E}\left[P(U V>x\mid V)\right]$$ and consequently, we arrive at
\begin{equation}\label{rep_S_X}
S_{a, b, \alpha}(x) = \mathbb{E}\left[P(U  V>x\mid V)\right]=\int_{0}^{\infty}S_{U}\left({x}/{v}\right)f_{V}(v){\rm d}v,
\end{equation}
where $f_{V}$ is the density function of the random variable $V$. For every fixed $v>0$ and $a \in (0, 1]$ the function $x \mapsto S_{U}\left({x}/{v}\right)$ is completely monotonic and it is known that positive mixtures preserve the complete monotonicity. Consequently, in view of \eqref{rep_S_X}, $x \mapsto S_{a, b, \alpha}(x)$ is completely monotone on $(0, \infty)$. Thus, by using Bernstein's theorem (see Lemma \ref{Bernstein theorem}), we have that
\begin{equation*}
    S_{a, b, \alpha}(x) = \int_{0}^{\infty}e^{-xs}{\rm d}\nu(s),
\end{equation*}
where $\nu$ is a probability measure on $[0, \infty)$. This implies that
$$f_{a, b, \alpha}(x) = -S'_{a, b, \alpha}(x) = \int_{0}^{\infty} s e^{-xs}{\rm d}\nu(s)$$ and the hazard rate function is
\begin{equation*}
h_{X}(x) = \frac{f_{a, b, \alpha}(x)}{S_{a, b, \alpha}(x)} = \frac{\displaystyle\int_{0}^{\infty}se^{-xs}{\rm d}\nu(s)}{\displaystyle\int_{0}^{\infty}e^{-xs}{\rm d}\nu(x)}.
\end{equation*}
By differentiating $h_{X}(x)$, we arrive at
\begin{align}\label{eq_der_h_X}
h'_{X}(x) = -\frac{\displaystyle\int_{0}^{\infty}s^2e^{-xs}{\rm d}\nu(s)}{\displaystyle\int_{0}^{\infty}e^{-xs}{\rm d}\nu(s)}+\left(\frac{\displaystyle\int_{0}^{\infty}se^{-xs}{\rm d}\nu(s)}{\displaystyle\int_{0}^{\infty}e^{-xs}{\rm d}\nu(s)}\right)^2.
\end{align}
Now, let us define the exponentially tilted measure
\begin{equation}\label{tilted}
{\rm d}\nu_{x}(s) = \frac{e^{-xs}}{\displaystyle\int_{0}^{\infty}e^{-xu}{\rm d}\nu(u)}{\rm d}\nu(s).
\end{equation}
Note that $\displaystyle\int_{0}^{\infty}{\rm d}\nu_{x}(s)=1$.
By using the exponentially tilted measure \eqref{tilted} in \eqref{eq_der_h_X}, we obtain that
$$h'_{X}(x) = -\mathbb{E}_{\nu_x}[s^2]+(\mathbb{E}_{\nu_x}[s])^2 = -\text{Var}_{\nu_x}(s).$$ Since the variance is non negative, we conclude that $h'_{X}(x)<0$ \footnote{We can also obtain this result by applying the Cauchy–Schwarz inequality for integrals.}. Thus, the random variable satisfies the decreasing failure rate property (DFR), that is, its failure rate or hazard rate is decreasing. Therefore, $X$ is NWU (see Lemma \ref{dfr-nwu}). By symmetry, we can clearly interchange the parameters $a$ and $b$. Thus, we conclude that $\min\{a, b\}\leq 1$ implies $X$ is DFR and then NWU.

Now, we are going to show that if both shapes $a$ and $b$ exceed one, then NWU fails near the origin. Let $m = \min \{a, b\}$. Assume that $m > 1$. For the case $a \neq b$, by using the relation \eqref{asymptotic kmu at zero}, we can verify that as $x \to 0$ the gamma-gamma density satisfies $f_{a, b, \alpha}(x) \sim C_1 x^{m-1}$ and then the cumulative distribution function $F_{a, b, \alpha}(x) \sim C_2 x^{m}$ for some positive constants $C_1$ and $C_2$. Now, observe that $$H_{X}(x) = -\log(S_{a, b, \alpha}(x))=-\log(1-F_{a, b, \alpha}(x)) \sim F_{a, b, \alpha}(x)$$ as $x \to 0$ and therefore $$H_{X}(2x)/H_{X}(x) \to 2^{m}>2\quad \mbox{as}\quad x \to 0.$$ But the NWU property is equivalent to the subadditivity of $H_{X}(x)$ (see eq. \eqref{nbu_equation}), that is $$H_{X}(x+y) \leq H_{X}(x) + H_{X}(y),$$ and we should have $H_{X}(2x)\leq 2H_{X}(x)$. Thus, we arrive to a contradiction and the NWU property fails in this case. For the case $a=b=m$ by using the relation \eqref{asymptotic kmu at zero}, we arrive at $F_{a, b, \alpha}(x) \sim C_3x^{m}\log(C_4/x)$ as $x \to 0$, where $C_3$ and $C_4$ are positive constants. Consequently, we have that $$H_{X}(2x)/H_{X}(x) \to 2^{m}>2\quad \mbox{as} \quad x \to 0,$$ so the NWU property is fails for the case $a=b$ also. Thus, we have shown the exact characterization: $X$ is NWU if and only if $\min\{a, b\}\leq 1$.

Now, we are going to show that NBU fails for every finite positive parameter pair $a$ and $b$. For fixed order $\nu$, as $z \to \infty$, we have (see \cite[eq. 10.40.2]{NIST10})
$$K_{\nu}(z) \sim \sqrt{\frac{\pi}{2z}}e^{-z} \left(1+\frac{4\nu^2-1}{8z}+\mathcal{O}(z^{-2})\right).$$
Consequently, the gamma-gamma density has the following asymptotic behaviour
$$f_{a, b, \alpha}(x) = \frac{\sqrt{\pi}c^{\frac{a+b}{2}-\frac{1}{4}}}{\Gamma(a)\Gamma(b)} x^{\frac{a+b}{2}-\frac{5}{4}} e^{-2\sqrt{cx}} \left(1+\frac{4(a-b)^2-1}{16\sqrt{cx}}+\mathcal{O}(x^{-1})\right)$$ as $x \to \infty$, and the main feature is the factor $e^{-2\sqrt{cx}}$, which decays more slowly than an ordinary exponential $e^{-cx}$. By integrating the leading tail, we arrive at
$$S_{a, b, \alpha}(x) = \int_{x}^{\infty} f_{a, b, \alpha}(t){\rm d}t \sim \frac{\sqrt{\pi}c^{\frac{a+b}{2}-\frac{3}{4}}}{\Gamma(a)\Gamma(b)} x^{\frac{a+b}{2}-\frac{3}{4}} e^{-2\sqrt{cx}}, \quad x \to \infty$$
and therefore
\begin{equation}\label{limit of survival}
-\log S_{a, b, \alpha}(x) = 2\sqrt{cx}-\left(\frac{a+b}{2}-\frac{3}{4}\right)\log(x)+\mathcal{O}(1)
\end{equation}
and in particular $H_{X}(x) \sim 2\sqrt{cx}$ as $x \to \infty$. Consequently, for every integer $n\geq 2$, we arrive at
$$H_{X}(nx)/H_{X}(x) \to \sqrt{n}\quad \mbox{as}\quad x \to \infty.$$ On the other hand it is known that the NBU property is equivalent to the superadditivity of $H_{X}(x) = -\log(S_{a, b, \alpha}(x))$ (see eq. \eqref{nbu_equation}), that is
$$H_{X}(x+y) \geq H_{X}(x) + H_{X}(y).$$ If $X$ were NBU, superadditivity would imply $H_{X}(nx) \geq n H_{X}(x)$, which would contradict $\sqrt{n}<n$, and therefore $X$ is never NBU for finite $a,$ $b>0$.
\end{proof}

\begin{remark}
{\em A compact way to understand the preceding classification is to consider $A(t)={H_{X}(t)}/{t},$ where $H_{X}(t) = -\log(S_{a, b, \alpha}(t))$. The function $A(t)$ is called in the literature as the normalized cumulative hazard function. If $X$ is NBU, then $H_{X}(t)$ is superadditive and hence for every integer $n \geq 1$, we have $H_{X}(nt) \geq nH_{X}(t)$ so that $A(nt) \geq A(t)$. Similarly, if $X$ is NWU, then $H_{X}$ is subadditive and $H_{X}(nt) \leq H_{X}(t)$ so that $A(nt) \leq A(t)$. These scaling inequalities are weaker than the global monotonicity of $A$, the later corresponds to the stronger IFRA/DFRA (increasing failure rate in average/ decreasing failure rate in average) ageing classes. Recall that for the gamma-gamma distribution as $t \to \infty$, we have
$$H_{X}(t) = 2\sqrt{ct}+ \mathcal{O}(\log(t))\quad  \mbox{and} \quad A(t) \sim 2\sqrt{c}/\sqrt{t} \to 0.$$ If $X$ is NBU, then for any fixed $t >0$, $A(nt) \geq A(t)>0$ for all $n$, contradicting $A(nt) \to 0$. Thus, the gamma-gamma distribution cannot be NBU for any positive $a, b$.}

{\em Let $m = \min \{a, b\}>1$. If $a \neq b$, then $H_{X}(t)\sim C t^m$ as $t \to 0$, whereas for the case $a=b=m$, we have
$$H_{X}(t)\sim C t^m \log(1/t)\quad \mbox{as}\quad t \to 0.$$ In either cases
$$A(t) = {H_{X}(t)}/{t} \to 0\quad \mbox{as}\quad t \to 0.$$ If $X$ is NWU, then
$$H_{X}(t)=H_{X}(n\cdot t/n)\leq n\cdot H_{X}(t/n),$$ and therefore $A(t)\leq A(t/n)$. Now let $n \to \infty$, and we arrive at $A(t) \leq 0$, impossible because $H_{X}(t)>0$ for all $t>0$. Hence, when $a$, $b>1$ the gamma-gamma distribution is not NWU. This yields the same complete picture in a particular concise form:
\begin{align*}
    \min\{a, b\}\leq 1 &\implies X \text{is} \text{DFR} \implies X \text{is} \text{NWU},\\
    \text{while}  a>1, b>1 &\implies X \text{is not NWU},\\
    \text{and}  a, b>0 &\implies X \text{is not NBU}.
\end{align*} }
\end{remark}

\begin{remark}\label{nwuProofGeneral}
{\em Another way to obtain the above classification is a direct approach of survival inequalities. For this, observe that if $U>0$ is NWU and $V>0$ is independent of $U$, then $X = U V$ is also NWU. Indeed, since $U$ is NWU, we have
$$S_{U}\left(\frac{x+y}{v}\right) \geq S_{U}\left(\frac{x}{v}\right) S_{U}\left(\frac{y}{v}\right)$$
for every $x,$ $y,$ $v>0$. On the other hand $$S_{X}(x+y) = \mathbb{E}\left[S_{U}\left(\frac{x+y}{V}\right)\right]$$ and therefore
$$S_{X}(x+y) \geq \mathbb{E}\left[S_{U}\left(\frac{x}{V}\right)S_{U}\left(\frac{y}{V}\right)\right].$$ Observe that the function $v \mapsto S_{U}\left({x}/{v}\right)$ and $v \mapsto S_{U}\left({y}/{v}\right)$ are increasing, so by the Chebyshev covariance inequality, we clearly have
$$\mathbb{E}\left[S_{U}\left(\frac{x}{V}\right)S_{U}\left(\frac{y}{V}\right)\right]\geq \mathbb{E}\left[S_{U}\left(\frac{x}{V}\right)\right]\mathbb{E}\left[S_{U}\left(\frac{y}{V}\right)\right],$$ that is $$S_{X}(x+y) \geq S_{X}(x)S_{X}(y).$$

Now, let $U$ and $V$ be two independent random variables such that $U$ follows a gamma distribution with shape parameter $a$ and scale parameter $\lambda_1$, while $V$ follows a gamma distribution with shape parameter $b$ and scale parameter $\lambda_2$. We know that a gamma distribution with shape parameter at most one is DFR, hence NWU. Therefore, if $\min\{a, b\} \leq 1$, at least one factor is NWU, and by the above closure argument we derive that $X = UV$ is NWU. Conversely, suppose that $a>1$, and $b>1$, and let $m = \min \{a, b\}>1$. If $X$ were NWU, then by taking $x = y =t$ we would have $S_{a, b, \alpha}(2t) \geq \left(S_{a, b, \alpha}(t)\right)^2$, which would imply $$F_{a, b, \alpha}(2t)/F_{a, b, \alpha}(t) \leq 2-F_{a, b, \alpha}(t).$$ Therefore any NWU distribution with the cdf $F_{a, b, \alpha}(x) \to 0$ as $t \to 0$ must satisfy $$\limsup_{t \to 0}{F_{a, b, \alpha}(2t)}/{F_{a, b, \alpha}(t)} \leq 2.$$ For the gamma-gamma distribution $F_{a, b, \alpha}(t) \sim C t^{m}$ for $a \neq b$ and $F_{a, b, \alpha}(t) \sim C t^{m} \log(1/t)$ for $a=b=m$, as $t \to 0$. So in both cases $${F_{a, b, \alpha}(2t)}/{F_{a, b, \alpha}(t)} \to 2^{m}>2,$$ which contradicts NWU. }
\end{remark}

The beta prime family provides a natural ratio analogue of the gamma-gamma distribution. Let $Y$ be a random variable which follows the beta prime distribution with parameters $a>0$ and $b>0$, that is its density function is
$$f_{Y}(x) = \frac{1}{\beta(p, q)}\frac{x^{a-1}}{(1+x)^{a+b}}, \quad x>0.$$
Equivalently, if $U$ and $V$ be two independent random variables such that $U$ follows a gamma distribution with shape parameter $a$ and scale parameter $\lambda_1$, while $V$ follows a gamma distribution with shape parameter $b$ and scale parameter $\lambda_2$, then the ratio $Y = U/V$ has the beta-prime distribution. By using the similar approach as before we are going to show the following characterization.

\begin{theorem}\label{neither nbu nor nwu}
The beta prime distribution is NWU if and only if $a \leq 1$ and the beta prime distribution is never NBU for $a,$ $b>0$, hence when $a>1$, the beta prime distribution is neither NWU nor NBU.
\end{theorem}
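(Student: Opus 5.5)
We prove the three assertions separately, following the product argument of Remark \ref{nwuProofGeneral} adapted to quotients, plus tail asymptotics of the explicit beta prime density.

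\textbf{NWU when $a\le 1$.} Write $Y=U\cdot(1/V)$ with $U$ gamma with shape $a\le 1$ and $V>0$ independent of $U$. A gamma variable with shape $a\le1$ has density $x^{a-1}e^{-\lambda_1x}$, which is completely monotone. Hence it is DFR, and therefore NWU by Lemma \ref{dfr-nwu}. The closure argument of Remark \ref{nwuProofGeneral} only uses that $U$ is NWU and that the second factor $W=1/V$ is positive and independent of $U$. Indeed,
$$S_Y(x+y)=\mathbb{E}[S_U((x+y)/W)]\ge \mathbb{E}[S_U(x/W)S_U(y/W)].$$
Both $w\mapsto S_U(x/w)$ and $w\mapsto S_U(y/w)$ are increasing, so Chebyshev's covariance inequality gives $S_Y(x+y)\ge S_Y(x)S_Y(y)$. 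Alternatively, one can check directly from $f_Y$ that $S_Y=\mathbb{E}[S_U(\cdot\,V)]$ is a mixture of completely monotone functions. Then $Y$ is DFR by the tilted-variance computation in the alternative proof, hence NWU.

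\textbf{Not NWU when $a>1$.} As $t\to0$ we have $f_Y(t)\sim t^{a-1}/\beta(a,b)$, so $F_Y(t)\sim t^a/(a\beta(a,b))$. As in Remark \ref{nwuProofGeneral}, the NWU inequality with $x=y=t$ gives $S_Y(2t)\ge S_Y(t)^2$, which yields
$$F_Y(2t)/F_Y(t)\le 2-F_Y(t).$$
Letting $t\to0$ gives $\limsup F_Y(2t)/F_Y(t)\le 2$. But this ratio tends to $2^a>2$, a contradiction. Note that this direction needs no symmetry in $a,b$, unlike the gamma-gamma case: the quotient is not symmetric, and only the behaviour at $0$, governed by $a$, matters.

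\textbf{Never NBU.} This step needs the most care, since the tail is now polynomial rather than $e^{-2\sqrt{cx}}$. As $x\to\infty$ we have $f_Y(x)\sim x^{-b-1}/\beta(a,b)$, so $S_Y(x)\sim x^{-b}/(b\beta(a,b))$. Hence
$$H_Y(x)=-\log S_Y(x)=b\log x+O(1)\quad\text{and}\quad H_Y(2x)-2H_Y(x)=-b\log x+O(1)\to-\infty.$$
NBU is equivalent to superadditivity of $H_Y$, which would force $H_Y(2x)\ge 2H_Y(x)$, a contradiction. Equivalently, $A(t)=H_Y(t)/t\to0$ while NBU would force $A(nt)\ge A(t)>0$.

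The main obstacle is making the asymptotic $O(1)$ terms rigorous. This follows from the ratio limits $S_Y(x)x^{b}\to 1/(b\beta(a,b))$, obtained via L'Hospital's rule, so no finer expansion is needed. Combining the three parts gives the theorem.
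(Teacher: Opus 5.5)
Your proof is correct, and two of its three parts match the paper's. The non-NWU argument for $a>1$ is the same: $F_Y(2t)/F_Y(t)\le 2-F_Y(t)$ contradicts $F_Y(2t)/F_Y(t)\to 2^a>2$. The non-NBU argument rests on the same logarithmic tail $H_Y(x)=b\log x+O(1)$. Your form $H_Y(2x)-2H_Y(x)\to-\infty$ is equivalent to the paper's $H_Y(x)/x\to 0$, and you also justify the $O(1)$ term via L'Hospital, which the paper leaves implicit.

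The genuine difference is the NWU direction for $a\le 1$. The paper argues directly from the explicit density: $x\mapsto x^{a-1}$ and $x\mapsto (1+x)^{-(a+b)}$ are completely monotone, so $f_Y$ and $S_Y$ are too. Hence $S_Y$ is a mixture of exponentials, and $Y$ is DFR and therefore NWU.

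Your main route instead writes $Y=U\cdot(1/V)$ and uses the closure of NWU under multiplication by an independent positive factor, via the Chebyshev covariance argument of Remark \ref{nwuProofGeneral}. The paper only records this closure for quotients afterwards, in a remark.

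What each buys:
\begin{itemize}
\item The paper's route is shorter, needs no stochastic representation, and yields the stronger DFR conclusion (indeed a completely monotone survival function).
\item Your main route yields only NWU, but it shows the conclusion depends solely on the gamma numerator being NWU, not on the law of the denominator.
\item Your secondary route through $S_Y(x)=\mathbb{E}[S_U(xV)]$ recovers DFR as well.
\end{itemize}

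One small point: $U/V$ is exactly the beta prime law only when the two gamma rates coincide; otherwise it is a rescaled beta prime. This is harmless, since NWU and NBU are invariant under scaling, but it should be stated.
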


\begin{proof}[\bf Proof of Theorem \ref{neither nbu nor nwu}]
We first show the NWU part. Suppose that $a \in (0, 1]$. The density is completely monotonic on $(0, \infty)$ as it is the product of the completely monotonic function $x \mapsto x^{a-1}$ and $x \mapsto (1+x)^{-(a+b)}$. Hence, the survival function $S_{Y}$ is completely monotonic as well on $(0, \infty)$ for all $a \in (0, 1]$ and $b>0$. By Bernstein's theorem, $S_{Y}$ is a mixture of exponentials (see Lemma \ref{Bernstein theorem}) and consequently the random variable $Y$ is DFR, and therefore NWU. For the converse, we shall use the behaviour at the origin. Since, $(1+x)^{-(a+b)} \to 1$ as $x \to 0$, we have that
$$f_{Y}(x) \sim {x^{a-1}}/{\beta(a, b)}\quad \mbox{and} \quad F_{Y}(x) \sim {x^a}/{(a\beta(a,b))}$$
as $x \to 0$. If $Y$ were NWU, then we would have $S_{Y}(2x) \geq (S_{Y}(x))^2$ or equivalently $F_{Y}(2x)/F_{Y}(x)\leq 2$. But, the preceding asymptotic relation implies that $F_{Y}(2x)/F_{Y}(x) \to 2^a$, and hence NWU requires $2^a\leq 2$, that is $a\leq 1$. Thus, $Y$ is NWU if and only if $a \leq 1$. It remains to consider the NBU property. As $x \to \infty$,
$$f_{Y}(x)\sim {x^{-b-1}}/{\beta(a, b)} \quad\mbox{and} \quad S_{Y}(x)\sim {x^{-b}}/{(b\beta(a, b))}.$$ Thus, we have
$$H_{Y}(x)=-\log S_{Y}(x)=b\log(x)+\mathcal{O}(1) \quad \mbox{as}\quad  x \to \infty,$$
and in particular $H_{Y}(x)/x \to 0$ as $x \to \infty$. If $Y$ were NBU, then $H_{Y}$ would be superadditive and for every integer $n \geq 1$ we would have $H_{Y}(nx) \geq n H_{Y}(x)$. For fixed $x > 0$, this would imply $${H_{Y}(nx)}/{nx} \geq {H_{Y}(x)}/{x}>0,$$ contradicting ${H_{Y}(t)}/{t}\to 0.$ Therefore, beta prime distribution with positive parameters is not NBU.
\end{proof}

\begin{remark}
{\em Following the discussion in Remark \ref{nwuProofGeneral}, it is clear that if $X>0$ is NWU, $Y>0$ is independent of $X$, then $Z = X Y$ is also NWU. Thus, in fact one NWU factor is enough, no ageing assumption is required on the other positive independent factor. Since multiplication is commutative, the same conclusion holds if $Y$, rather than $X$, is NWU. The quotient case is immediate. More precisely, if $X>0$ is NWU and $Y>0$ is independent of $X$, then $X/Y$ is NWU. More generally, $X/Y$ is NWU whenever either $X$ is NWU or $Y^{-1}$ is NWU.

Finally, we would like to mention that the DFR property is also preserved under independent multiplication. More precisely, if $X$ is DFR and $Y>0$ is independent of $X$, then $XY$ is DFR. For this recall that a distribution is DFR if and only if its survival function is log-convex (see \cite[Theorem 4.1]{BP96}). For each fixed $v>0$, the survival function $t \mapsto S_{X}(t/v)$ is log-convex. Moreover, $S_{XY}(t) = \mathbb{E}\left[S_{X}\left(t/Y\right)\right]$ is also log-convex, since the H\"{o}lder-Rogers inequality for integrals implies that
$$S_{XY}(\theta x+(1-\theta)y) \leq \mathbb{E}\left[\left(S_{X}\left(\frac{x}{Y}\right)\right)^{\theta}\left(S_{X}\left(\frac{y}{Y}\right)\right)^{1-\theta}\right]\leq \left(S_{XY}(x)\right)^{\theta}\left(S_{XY}(y)\right)^{1-\theta}$$ for every $\theta \in [0, 1]$, $x,$ $y>0$. Thus, $XY$ is DFR. This is consistent with the general closure of DFR distribution under mixtures \cite{FE01}.}
\end{remark}

\section{\bf Infinite divisibility properties of the survival function}
\setcounter{equation}{0}

In this section we are going to present a more complete picture of the survival function of the gamma-gamma distribution. Our aim is to prove much stronger statements than the fact that the gamma-gamma distribution satisfies the decreasing failure rate (DFR) property and hence the new is worse than used property. The main theorem of this section is the following result.

\begin{theorem}\label{theorem on infinite divisibility}
    If $\min \{a, b\}\leq 1$ and $\alpha >0$, then the following statements are valid.
    \begin{enumerate}
        \item[\bf a.] $S_{a, b, \alpha}(x)$ is a hyperbolically completely monotone function.
        \item[\bf b.] $H_{X}(x) = -\log S_{a, b, \alpha}(x)$ is a Thorin-Bernstein function and hence a complete Bernstein function.
        \item[\bf c.] The hazard function $h_{X}(x) = H'_{X}(x)$ is a Stieltjes function.
        \item[\bf d.] The Mills ratio $M_{X}(x) = S_{a, b, \alpha}(x)/f_{a, b, \alpha}(x)=1/h_{X}(x)$ is a complete Bernstein function.
        \item[\bf e.] The normalized cumulative hazard $H_{X}(x)/x$ is a Stieltjes function.
    \end{enumerate}
\end{theorem}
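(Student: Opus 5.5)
The plan is to prove part \textbf{a} by exhibiting $S_{a,b,\alpha}$ as the Laplace transform of an explicit generalized gamma convolution, and then to read off \textbf{b}--\textbf{e} from the Thorin representation. By the symmetry $f_{a,b,\alpha}=f_{b,a,\alpha}$ used in the proof of Theorem \ref{a<1_NWU}, we may assume $a\le 1$. As in the alternative proof of Theorems \ref{a<1_NWU} and \ref{a>1_NWU}, write $X=UV$ with $U\sim\Gamma(a,\lambda_1)$ and $V\sim\Gamma(b,\lambda_2)$ independent, and $\lambda_1\lambda_2=ab/\alpha$.

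The first step is the beta--gamma algebra identity for $a<1$:
\[
G_a\overset{d}{=}B_{a,1-a}\,G_1,
\]
where $G_s$ denotes a standard gamma variable of shape $s$, $B_{a,1-a}$ is a beta variable, and the factors are independent. When $a=1$ we simply take $B\equiv 1$. This gives $U\overset{d}{=}E\,B/\lambda_1$ with $E$ standard exponential, so
\[
X\overset{d}{=}E\cdot \frac{BV}{\lambda_1},
\]
with $E,B,V$ independent. Since $P(E\,Y>x)=\mathbb{E}[e^{-x/Y}]$ for any positive $Y$ independent of $E$, we obtain
\[
S_{a,b,\alpha}(x)=\mathbb{E}\left[e^{-xW}\right],\qquad W=\frac{\lambda_1}{BV}=\lambda_1\cdot\frac1B\cdot\frac1V.
\]
This already gives $S_{a,b,\alpha}(0^+)=1$.

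The main step, and the one I expect to be the obstacle, is to show that $W$ is a GGC. The ingredients are the following.
\begin{itemize}
\item $1/V$ is an inverse gamma variable, which is a GGC by a classical result of Bondesson \cite{Bo92}.
\item $1/B=1+G_{1-a}/G_a$, i.e.\ $1$ plus a beta-prime variable. Beta-prime laws are GGC (again \cite{Bo92}), and adding the constant $1$ only contributes the drift term $as$ in the Laplace exponent.
\item Scaling by $\lambda_1$ preserves the class.
\item The product of two independent GGC variables is again a GGC, by Bondesson's product theorem.
\end{itemize}
I would invoke these rather than reprove them; the product closure is the deep input. If citing it is unwanted, a fallback is to verify directly that the density of $W$ is hyperbolically completely monotone, but I would try the citation first. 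With $W$ a GGC, Lemma \ref{hcm-ggc} yields part \textbf{a}.

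For \textbf{b}, the GGC representation gives
\[
H_X(s)=-\log S_{a,b,\alpha}(s)=\beta s+\int_0^\infty \log\left(1+\frac st\right){\rm d}\mu(t),
\]
where $\beta\ge0$ is the drift. Using
\[
\log\left(1+\frac st\right)=\int_0^\infty(1-e^{-su})\,\frac{e^{-tu}}{u}\,{\rm d}u
\]
and Fubini, we get the form \eqref{cbf definition} with
\[
m(u)=\frac1u\int_0^\infty e^{-tu}\,{\rm d}\mu(t).
\]
Then $u\,m(u)$ is a Laplace transform of a measure, hence completely monotonic, so $H_X$ is Thorin--Bernstein and therefore complete Bernstein. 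The integrability condition on $\min\{1,u\}m(u)$ is inherited from the integrability conditions on $\mu$.

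For \textbf{c}, differentiating gives
\[
h_X(s)=\beta+\int_0^\infty\frac{{\rm d}\mu(t)}{s+t},
\]
which is a Stieltjes function. Differentiation under the integral is justified by monotone convergence.

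Finally, \textbf{d} and \textbf{e} follow from Lemma \ref{cbf-sf}. Since $h_X$ is Stieltjes, $1/h_X$ is complete Bernstein by the implication \textbf{c}$\Rightarrow$\textbf{a} of that lemma; this is \textbf{d}. Since $H_X$ is complete Bernstein by \textbf{b}, the function $H_X(x)/x$ is Stieltjes by the implication \textbf{a}$\Rightarrow$\textbf{b}; this is \textbf{e}.
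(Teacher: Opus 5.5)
Your proposal is correct and follows essentially the same route as the paper. Your $W=\lambda_1/(BV)$ is exactly the paper's $R=c/(BG)$: the paper obtains the law of $\lambda_1/B$ by writing $\Gamma(a,\lambda_1 x)/\Gamma(a)$ as a Laplace transform instead of using $G_a\overset{d}{=}B_{a,1-a}G_1$. It then proves $R$ is a GGC by the same decomposition ($1$ plus a beta-prime variable, an inverse gamma factor, and Bondesson's product closure), and reads off parts b--e from the Thorin representation and Lemma \ref{cbf-sf} as you do; your Frullani derivation of the Thorin--Bernstein form and your handling of $a=1$ (via $B\equiv 1$) supply details the paper leaves implicit.
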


\begin{proof}[\bf Proof of Theorem \ref{theorem on infinite divisibility}]
{\bf a.} Let $a \in (0, 1)$ and $U$ follows the gamma distribution with shape parameter $a$ and rate parameter $\lambda_1$. Then, we have that
\begin{equation}\label{survival gamma}
S_{U}(x) = \frac{\Gamma(a, \lambda_1 x)}{\Gamma(a)}=\int_{\lambda_1}^{\infty}e^{-rx}w_{a}(r){\rm d}r,\quad \mbox{where}\quad  w_{a}(r) = \frac{\lambda^a_1}{\Gamma(a)\Gamma(1-a)}\frac{(r-\lambda_1)^{-a}}{r},\ r>\lambda_1,
\end{equation}
where $$\Gamma(\mu,x)=\displaystyle\int_x^{\infty}t^{\mu-1}e^{-t}\,{\rm d}t$$ denotes the upper incomplete gamma function. Indeed since (see \cite[eq. 5.9.1]{NIST10})
$$x^{a-1} = \frac{1}{\Gamma(1-a)}\int_{0}^{\infty}e^{-tx}t^{-a}{\rm d}t,$$ and consequently,
$$\Gamma(a, \lambda_1 x) = \frac{e^{-\lambda_1 x}}{\Gamma(1-a)}\int_{0}^{\infty}\frac{e^{-\lambda_1 x t} t^{-a}}{1+t}{\rm d}t.$$
By using the transformation $r = \lambda_1(1+t)$ in the above equation, we arrive at \eqref{survival gamma}. Let $b,$ $\lambda_2>0$ and $V$ follows the gamma distribution with shape parameter $b$ and rate parameter $\lambda_2$. By using the transformation $t = (r-\lambda_1)/\lambda_1$, we obtain that 
$$\int_{\lambda_1}^{\infty} w_{a}(r){\rm d}r = 1.$$ 
Now, let $R_0$ be the random variable independent of $V$ and supported on $(\lambda_1, \infty)$ with density function $w_{a}(r)$. Let $X = U  V$. The survival function $S_{a, b, \alpha}(x)$ of $X$ is given as
$$S_{a, b, \alpha}(x)(x) = \mathbb{E}[S_{U}(x/V)] = \mathbb{E}[\exp(-xR_0/V)],$$ and thus for $R = R_0/V$, we have $S_{X}(x) = \mathbb{E}[e^{-xR}]$. Moreover, if we define the random variables $B = \lambda_1/R_0$, and by using the change of variables we obtain that $B$ follows the beta distribution with shape parameters $a$ and $1-a$. By defining $G = \lambda_2 V$, it follows that $G$ follows the gamma distribution with shape parameter $b$ and rate parameter $1$. Consequently, we have $R = (\lambda_1/B)/(G/\lambda_2)$. The density function of the random variable $B$ and $G$ are
$$f_{B}(u) = \frac{u^{a-1}(1-u)^{-a}}{\Gamma(a)\Gamma(1-a)} \quad \mbox{and}\quad f_{G}(t) = \frac{1}{\Gamma(b)}t^{b-a}e^{-t},$$ respectively. Since $B$ and $G$ are independent, we can deduce the density $f_{R}(x)$ of $R$ by conditioning on $B$. For this let $c = \lambda_1\lambda_2$. For fixed $u \in (0, 1)$, the relation $R = c/(u G)$ and the change of variable formula yields
\begin{align*}
f_{R \mid B}(x \mid u) &= f_{G}\left(\frac{c}{ux}\right)\left| \frac{{\rm d}G}{{\rm d}x}\right| = \frac{c^{b}}{\Gamma(b)}\frac{e^{-c/(ux)}}{u^bx^{b+1}}.
\end{align*}
Consequently, from the relation $$f_{R}(x) = \int_{0}^{1} f_{X\mid B}(x \mid u) f_{B}(u) {\rm d}u,$$ we arrive at
$$f_{R}(x) = \frac{c^{b}x^{-b-1}}{\Gamma(b)\Gamma(a)\Gamma(1-a)}\int_{0}^{1}u^{a-b-1}(1-u)^{-a}e^{-c/(ux)} {\rm d}u, \quad x>0.$$ By using the transformation $t = {(1-u)}/{u}$, we obtain that 
$$f_{R}(x) = \frac{c^bx^{-b-1}}{\Gamma(b)\Gamma(a)}\frac{e^{-c/x}}{\Gamma(1-a)}\int_{0}^{\infty}t^{-a}(1+t)^{b-1}e^{-ct/x} {\rm d}t.$$ By using the integral representation for the Tricomi confluent hypergeometric function (see \cite[eq.
13.4.4]{NIST10}) $$\psi(a, b, z) = \frac{1}{\Gamma(a)}\int_{0}^{\infty}e^{-zt}t^{a-1}(1+t)^{b-a-1} {\rm d}t,$$ we arrive at
\begin{equation}\label{pdf of R}
f_{R}(x) = \frac{c^b}{\Gamma(a)\Gamma(b)}x^{-b-1}e^{-c/x}\psi(1-a, 1+b-a, c/x), \quad x>0.
\end{equation}
Consequently, we have that
\begin{equation*}
S_{a, b, \alpha}(x) = \mathbb{E}\left[e^{-xR}\right]=\int_{0}^{\infty}e^{-xr}f_{R}(r) {\rm d}r.
\end{equation*}
Now, we are going to show that $R$ is a generalized gamma convolution. Indeed $R = c/(B G)$, $c = \lambda_1 \lambda_2$. Observe that $(1-B)/B$ follows the beta prime distribution of shape parameters $1-a$ and $a$. The beta prime distribution is in fact the ratio of the independent gamma variables. The ratio of two gamma distribution is a generalized gamma convolution (see \cite[Theorem 23]{BPSV26}) and hence $B^{-1} = 1+(1-B)/B$ is a generalized gamma convolution, because the class of generalized gamma convolution is closed under convolution with a nonnegative constant \cite[p. 29]{Bo92}. On the other hand $G^{-1}$ has density $$\xi_{b}(x) = \frac{1}{\Gamma(b)}x^{-b-1}e^{-1/x},$$ which is clearly hyperbolically completely monotonic. Since as a function of $w = v + 1/v$ the function $w \mapsto \xi_b(uv)\xi_{b}(u/v) = c(u) \exp(-w/u)$ is completely monotone (see \cite[p. 3]{Bo92}). Thus, $B^{-1}G^{-1}$ is a generalized gamma convolution, since the class of generalized gamma convolution is closed under product of independent positive random variable by Bondesson's multiplicative closure theorem.

Thus, we proved that the random variable $R$ is a generalized gamma convolution; combining this with the Bondesson's characterization (see Lemma \ref{hcm-ggc}), which states that the Laplace transform of a generalized gamma convolution is a hyperbolically completely monotone function, we have that $S_{a, b, \alpha}$ is indeed hyperbolically completely monotone.

{\bf b. \& c. \& d. \& e.} The Thorin representation implies that (see \cite[p. 29]{Bo92})
$$H_{X}(x) = \zeta x+\int_{0}^{\infty}\log(1+x/t) {\rm d}T(t),$$
where $\zeta\geq 0$, and ${\rm d}T(t)$ is a nonegative measure on $(0, \infty)$. Consequently, we arrive at
$$h_{X}(x) = \zeta+\int_{0}^{\infty}\frac{{\rm d}T(t)}{x+t}.$$ This is a Stieltjes function representation. In fact a gamma-gamma distribution tail satisfies $h_{X}(x) \sim \sqrt{c/x}$ as $x \to \infty$ (see \eqref{limit of survival}) and therefore the constant $\zeta$ is zero.

Finally, recall that a nonzero function is a Stieltjes function if and only if its reciprocal is a complete Bernstein function (see Lemma \ref{cbf-sf}); moreover if $g$ is complete Bernstein function, then $g(x)/x$ is a Stieltjes function (see Lemma \ref{cbf-sf}). Applying these to $h_{X}$ and $H_{X}$ proves the remaining assertions.
\end{proof}

\begin{remark}
{\em Theorem \ref{theorem on infinite divisibility} is substantially stronger than DFR. Since a Stieltjes function is completely monotone, that is $$(-1)^nh^{(n)}_{X}(x) \geq 0, \quad \mbox{for all} \quad n \in \{0, 1, 2, \cdots\}\quad \mbox{and} \quad x>0.$$ Thus, not only $h_{X}$ is decreasing, but all its derivatives alternate in sign. Likewise, because $M_{X}$ is complete Bernstein function, $M'_{X}(x) \geq 0$, $M_{X}''(x)\leq 0$ and $M_{X}'(x)$ is completely monotone. Hence, the Mills ratio of the gamma-gamma distribution in the NWU region is increasing and concave. }
\end{remark}

The following remark establishes the uniqueness of the mode of the random variable $R$, whose density function is given in \eqref{pdf of R}, and also provides simple bounds for the mode.

\begin{remark}
{\em We note that, the reciprocal rate $W = c/R$ has the simpler density
$$f_{W}(x) = \frac{x^{b-1}e^{-x}}{\Gamma(a)\Gamma(b)}\psi(1-a, 1+b-a, x), \quad x>0.$$ Moreover, it can be shown that if $0<b<a<1$, then $f_{W}$ is logarithmically completely monotone, that is $$(-1)^n \frac{{\rm d}^n}{{\rm d}x^n}\log f_{W}(x) \geq 0, \quad n \in \{1, 2, \cdots\}\quad \mbox{and} \quad x >0.$$ The proof of this affirmation is based on the fact that
\begin{align*}
-\frac{{\rm d}}{{\rm d}z}\log f_{W}(x) &= 1+\frac{1-b}{x}+(1-a)\frac{\psi(2-a, 2+b-a, x)}{\psi(1-a, 1+b-a, x)}\\
&= 1+\frac{1-b}{x}+(1-a)\int_{0}^{\infty}\frac{1}{x+t} \frac{t^{-(1+b-a)}e^{-t}|\psi(1-a, 1+b-a, te^{{\rm i}\pi})|^{-2}}{\Gamma(2-a)\Gamma(1-b)} {\rm d}t,
\end{align*}
where we used the well-known Stieltjes transform representation of the quotient of the Tricomi confluent hypergeometric function with conditions $1-a>0,$ and $1+b-a<1$ (see \cite{IK79}).

Finally, let $\phi(x) = x+(1-a)xQ(x),$ where
$$ Q(x) = \frac{\psi(2-a, 2+b-a, x)}{\psi(1-a, 1+b-a, x)}.$$ Since $Q$ is a Stieltjes function, $xQ(x)$ is a complete Bernstein function and is increasing. Therefore, $\phi$ is strictly increasing for $a<1,$ and $b<a$. A direct calculation from $f_{R}(x) = (c/x^2)f_{W}(c/x)$ implies that
$$x \frac{{\rm d}}{{\rm d}x}\log f_{R}(x) = \phi(c/x)-(b+1).$$ Moreover, $\phi(0^{+})=0$ and $\phi(x) \to \infty$ as $x \to \infty$. Consequently, there is a unique $x^*>0$  which satisfies $x^*(1+(1-a)Q(x^*)) = b+1$ and $f_{R}$ has a unique mode $m^* = c/x^*$. Since $0<xQ(x)<1$, we have that $a+b < x^* < b+1$, which is equivalent to the elementary mode bounds
$$\frac{c}{b+1}< m^* < \frac{c}{a+b}.$$ Thus the Stieltjes transform representation provides both uniqueness and localization of the mode.}
\end{remark}

\section{\bf Integral representations and bounds for $S_{a, b, \alpha}(x)$}
\setcounter{equation}{0}

The following theorem establishes a new integral representation for $S_{a,b,\alpha}(x)$, which will be useful in deriving monotonicity properties and bounds for $S_{a,b,\alpha}(x)$.

\begin{theorem}\label{integral representation theorem}
If $a,b,\alpha>0,$ then the survival function $S_{a,b,\alpha}(x)$ admits the following integral representation
\begin{equation}\label{integral_rep_S}
S_{a,b,\alpha}(x) = \frac{1}{\Gamma(a)\Gamma(b)}
\int_{0}^{\infty}
e^{-s}s^{b-1}
\Gamma\left(a,\frac{abx}{\alpha s}\right){\rm d}s.
\end{equation}
\end{theorem}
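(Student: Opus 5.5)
The plan is to use the product representation already established in the alternative proof of Theorems \ref{a<1_NWU} and \ref{a>1_NWU}. Write $X=UV$ with $U$ and $V$ independent gamma random variables of shapes $a$ and $b$ and rates $\lambda_1,\lambda_2$ satisfying $\lambda_1\lambda_2=c=ab/\alpha$; for definiteness take $\lambda_2=1$ and $\lambda_1=ab/\alpha$. By \eqref{rep_S_X} we have
$$S_{a,b,\alpha}(x)=\int_0^\infty S_U(x/v)f_V(v)\,{\rm d}v,$$
and by \eqref{survival gamma} the gamma survival function is $S_U(y)=\Gamma(a,\lambda_1 y)/\Gamma(a)$. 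Inserting $f_V(v)=v^{b-1}e^{-v}/\Gamma(b)$ and renaming $v=s$ gives
$$S_{a,b,\alpha}(x)=\frac{1}{\Gamma(a)\Gamma(b)}\int_0^\infty e^{-s}s^{b-1}\,\Gamma\!\left(a,\frac{abx}{\alpha s}\right){\rm d}s,$$
which is \eqref{integral_rep_S}.

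Since the theorem is stated for all $a,b>0$, I will avoid using \eqref{survival gamma} itself, which was derived only for $a<1$. I will only use the elementary identity $P(U>y)=\Gamma(a,\lambda_1 y)/\Gamma(a)$, which holds for every $a>0$ after the substitution $t\mapsto\lambda_1 t$ in the gamma density.

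The main check is that $X=UV$ has exactly the density $f_{a,b,\alpha}$. This was done in the alternative proof via \eqref{integral representation for kmu}, and the paper states it for all positive parameters. To keep the argument self-contained, I would also verify it directly. Differentiating the right-hand side in $x$ under the integral sign is justified by dominated convergence, since the integrand is bounded by $\Gamma(a)e^{-s}s^{b-1}$. Using $\partial_y\Gamma(a,y)=-y^{a-1}e^{-y}$, this gives
$$-\frac{(ab/\alpha)^a x^{a-1}}{\Gamma(a)\Gamma(b)}\int_0^\infty s^{b-a-1}e^{-s-\frac{abx}{\alpha s}}\,{\rm d}s .$$
By \eqref{integral representation for kmu} with $\nu=a-b$, $z=abx/\alpha$, and the symmetry $K_{-\nu}=K_\nu$, this equals $-f_{a,b,\alpha}(x)$. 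Finally, the right-hand side tends to $0$ as $x\to\infty$ by dominated convergence, since $\Gamma(a,\cdot)\to0$. Hence it coincides with $S_{a,b,\alpha}$, because both have derivative $-f_{a,b,\alpha}$ and vanish at infinity. I expect no real obstacle beyond the bookkeeping of the constant $(ab/\alpha)^{a}\cdot(ab/\alpha)^{(b-a)/2}=(ab/\alpha)^{(a+b)/2}$.
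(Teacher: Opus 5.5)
Your argument is correct, but it is organized differently from the paper's. The paper works purely analytically. It inserts the integral representation of $K_{a-b}(2\sqrt{z})$ into the defining integral of $S_{a,b,\alpha}$ and interchanges the $t$- and $s$-integrations by Tonelli, all integrands being nonnegative. It then evaluates the inner integral as $\int_x^\infty t^{a-1}e^{-abt/(\alpha s)}\,{\rm d}t=(\alpha s/(ab))^a\,\Gamma(a,abx/(\alpha s))$.

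You instead read the formula off probabilistically as $S_{a,b,\alpha}(x)=\mathbb{E}[S_U(x/V)]$ with $\lambda_1=ab/\alpha$, $\lambda_2=1$. You then check it independently by differentiating the candidate right-hand side, matching it with $-f_{a,b,\alpha}$, and using the common limit $0$ at infinity. Substituting $u=t/s$ in $\int_{x/s}^\infty f_U$ shows that both routes evaluate the same double integral in different orders. The identification of the density of $UV$ with $f_{a,b,\alpha}$ is the paper's Fubini step in disguise.

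Your framing makes the structure transparent: the incomplete gamma factor is the tail of $U$, and $e^{-s}s^{b-1}/\Gamma(b)$ is the density of $V$. The paper's route needs neither the product representation nor any differentiation under the integral sign. You were right to use only $S_U(y)=\Gamma(a,\lambda_1 y)/\Gamma(a)$ and not the mixture formula in \eqref{survival gamma}, which holds only for $a<1$.

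One small repair in your verification concerns the bound $\Gamma(a)e^{-s}s^{b-1}$ on the integrand. It justifies the limit as $x\to\infty$, but not differentiation under the integral sign. For that you need a locally uniform integrable bound on the $x$-derivative. For $x\in[x_0,x_1]\subset(0,\infty)$ one has
\[
\left|\partial_x\Bigl(e^{-s}s^{b-1}\Gamma\bigl(a,\tfrac{abx}{\alpha s}\bigr)\Bigr)\right|\le \Bigl(\frac{ab}{\alpha}\Bigr)^{a}\max\{x_0^{a-1},x_1^{a-1}\}\,s^{b-a-1}e^{-s-\frac{abx_0}{\alpha s}},
\]
and the right-hand side is integrable on $(0,\infty)$. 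Alternatively, write $\Gamma(a,abx/(\alpha s))$ as an integral over $t\in(x,\infty)$ and apply Tonelli, which is exactly the paper's computation. Finally, the appeal to $K_{-\nu}=K_\nu$ is unnecessary, since the integral representation of $K_\nu(2\sqrt{z})$ holds for every real order $\nu$.
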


\begin{proof}[\bf Proof of Theorem \ref{integral representation theorem}]
By using the integral representation of the modified Bessel function \eqref{integral representation for kmu} in \eqref{survival introduction}, we have that
\begin{align*}
S_{a, b, \alpha}(x) &= \frac{2(ab)^{\frac{a+b}{2}}}{\Gamma(a)\Gamma(b)\alpha^{\frac{a+b}{2}}}\int_{x}^{\infty}t^{\frac{a+b}{2}-1}K_{a-b}\left(2\sqrt{\frac{ab}{\alpha}t}\right) {\rm d}t\\
&=\frac{2(ab)^{\frac{a+b}{2}}}{\Gamma(a)\Gamma(b)\alpha^{\frac{a+b}{2}}}\int_{x}^{\infty}t^{\frac{a+b}{2}-1}\cdot\frac{1}{2}\left(\frac{abt}{\alpha}\right)^{\frac{a-b}{2}}\left(\int_{0}^{\infty}e^{-s-\frac{ab}{\alpha s}t}\frac{1}{s^{a-b+1}} {\rm d}s\right) {\rm d}t\\
&=\frac{(ab)^{a}}{\Gamma(a)\Gamma(b)\alpha^a}\int_{0}^{\infty}\frac{e^{-s}}{s^{a-b+1}}\left(\int_{x}^{\infty}t^{a-1}e^{-\frac{ab}{\alpha s}t} {\rm d}t\right) {\rm d}s.
\end{align*}
By using the transformation $T={abt}/{\alpha s} $, we obtain the required integral representation \eqref{integral_rep_S}.
\end{proof}

By using the integral representation \eqref{integral_rep_S} the following theorem establishes that $x \mapsto S_{a, b, \alpha}(x)$ is completely monotone on $(0, \infty)$ and $\alpha \mapsto S_{a, b, \alpha}(x)$ is increasing on $(0, \infty)$.

\begin{theorem}\label{consequence of integral}
Let $a,$ $b,$ $\alpha >0$. The following statements are true.
    \begin{enumerate}
        \item[\bf a.] If $a \leq 1$ or $b \leq 1$, then $x \mapsto S_{a, b, \alpha}(x)$ is completely monotone on $(0, \infty)$.
        \item[\bf b.] The function $\alpha \mapsto S_{a, b, \alpha}(x)$ is increasing on $(0, \infty)$.
    \end{enumerate}
\end{theorem}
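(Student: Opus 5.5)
Both parts will be read off the representation \eqref{integral_rep_S} of Theorem \ref{integral representation theorem}:
\[
S_{a,b,\alpha}(x)=\frac{1}{\Gamma(a)\Gamma(b)}\int_0^\infty e^{-s}s^{b-1}\Gamma\left(a,\frac{abx}{\alpha s}\right){\rm d}s .
\]

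\textbf{Part a.} By the symmetry $S_{a,b,\alpha}=S_{b,a,\alpha}$ (established in the proof of Theorem \ref{a<1_NWU} via $K_{-\nu}=K_\nu$), it suffices to treat $a\leq 1$.

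For fixed $s>0$, set $\lambda=ab/(\alpha s)$. I will show that $x\mapsto \Gamma(a,\lambda x)$ is completely monotone on $(0,\infty)$. Its derivative is
\[
-\lambda^{a}x^{a-1}e^{-\lambda x}.
\]
Since $x^{a-1}$ with $a\le 1$ and $e^{-\lambda x}$ are both completely monotone, so is their product. Hence $-\frac{{\rm d}}{{\rm d}x}\Gamma(a,\lambda x)$ is completely monotone. Moreover $\Gamma(a,\lambda x)$ is positive and tends to $0$ as $x\to\infty$. Together these give complete monotonicity of $\Gamma(a,\lambda x)$ itself, exactly as in the alternative proof of Theorem \ref{a<1_NWU}. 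For $a<1$ one can also cite the explicit exponential mixture \eqref{survival gamma}, and the case $a=1$ is simply $e^{-\lambda x}$.

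The integral is then a positive mixture, with weight $e^{-s}s^{b-1}$, of completely monotone functions of $x$, so it is completely monotone. The only technical point is differentiating under the integral sign. On $x\ge x_0>0$, the $n$-th derivative is bounded by
\[
\lambda^{a+n}\,C_n\,x_0^{a-1-n}e^{-\lambda x_0/2}
\]
for a suitable constant $C_n$, and this stays integrable against $e^{-s}s^{b-1}$. As $s\to 0$ we have $\lambda\to\infty$ and the exponential factor kills the polynomial growth. As $s\to\infty$ we have $\lambda\to 0$ and the bound is polynomial in $\lambda$. So dominated convergence justifies the interchange.

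\textbf{Part b.} The parameter $\alpha$ appears only through the argument $abx/(\alpha s)$. This argument is strictly decreasing in $\alpha$, and $z\mapsto\Gamma(a,z)$ is strictly decreasing in $z$ because its derivative is $-z^{a-1}e^{-z}<0$. So for each fixed $s$ and $x$, the integrand is strictly increasing in $\alpha$, and integrating against the positive weight preserves this; hence $\alpha\mapsto S_{a,b,\alpha}(x)$ is increasing.

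If a derivative formula is preferred, differentiating under the integral gives
\[
\frac{\partial}{\partial\alpha}S_{a,b,\alpha}(x)=\frac{1}{\Gamma(a)\Gamma(b)}\int_0^\infty e^{-s}s^{b-1}\,\frac{1}{\alpha}\,z^{a}e^{-z}\,{\rm d}s>0,
\qquad z=\frac{abx}{\alpha s}.
\]

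I expect no real obstacle. The only step needing care is the domination argument for differentiating under the integral in part a.
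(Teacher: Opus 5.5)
Your proposal is correct and follows the paper's route. For part a, the paper also differentiates \eqref{integral_rep_S} once in $x$, notes that $x\mapsto x^{a-1}e^{-abx/(\alpha s)}$ is completely monotone for $a\le 1$, and uses the symmetry for $b\le 1$; for part b it differentiates in $\alpha$ and obtains exactly your formula, though your pointwise-monotonicity argument is slightly cleaner because it needs no interchange of derivative and integral.

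There is one small slip in the domination step you flagged. For $n\ge 1$ the $n$-th $x$-derivative of $\Gamma(a,\lambda x)$ is $-\lambda^{a}e^{-\lambda x}$ times a combination of the terms $\lambda^{n-1-k}x^{a-1-k}$, $0\le k\le n-1$. So on $x\ge x_0$ the correct bound is $C_n(x_0)\,\lambda^{a}(1+\lambda)^{n-1}e^{-\lambda x_0}$, not one with $\lambda^{a+n}$, which fails as $\lambda\to 0$, i.e. as $s\to\infty$. With the corrected bound, integrability against $e^{-s}s^{b-1}$ at both ends goes through exactly as you describe.
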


\begin{proof}[\bf Proof of Theorem \ref{consequence of integral}]
{\bf a.} From the integral representation \eqref{integral_rep_S} we can easily verify that
$$\frac{{\rm d}}{{\rm d}x}S_{a, b, \alpha}(x) = -\frac{1}{\Gamma(a)\Gamma(b)}\left(\frac{ab}{\alpha}\right)^{a}\int_{0}^{\infty} e^{-s}s^{b-a}\left(x^{a-1}e^{-\frac{ab}{\alpha s}x}\right) {\rm d}s.$$ Note that when $a\leq 1$, the functions $x \mapsto x^{a-1}e^{-\frac{ab}{\alpha s}x}$ is completely monotonic on $(0, \infty)$. Thus, when $a \leq 1$, the function $x \mapsto S_{a, b, \alpha}(x)$ is completely monotone on $(0, \infty)$. Furthermore for the case $b \leq 1$, the same conclusion follows because of the symmetry $S_{a, b, \alpha}(x) = S_{b, a, \alpha}(x)$.

{\bf b.} By differentiating the integral representation \eqref{integral_rep_S} with respect to $\alpha$ leads to
$$ \frac{{\rm d}}{{\rm d}\alpha}S_{a, b, \alpha}(x) = \frac{(abx)^a}{\Gamma(a)\Gamma(b)}\left(\frac{1}{\alpha^{a+1}}\right)\int_{0}^{\infty}e^{-s}s^{b-a-1}e^{-\frac{abx}{\alpha s}} {\rm d}s>0.$$
\end{proof}

The following remark provides an alternative proof of the result established in Theorem \ref{a<1_NWU}.

\begin{remark}
{\em Recall that for $a\leq 1$ or $b\leq 1$, the function $x\mapsto S_{a,b,\alpha}(x)$ is completely monotonic on $(0,\infty)$. Consequently, by using the fact that every completely monotone function on $(0, \infty)$ is log-convex (see for example \cite[Theorem A]{Fi82}), we have that $x\mapsto S_{a,b,\alpha}(x)$ is log-convex on $(0,\infty)$. Thus, in view of Lemma \ref{NBU_theorem_S}, the $K$-distribution possesses the NWU property.}
\end{remark}

By using the integral representation \eqref{integral_rep_S} the following corollary gives a lower bound for the survival function $S_{a, b, \alpha}(x)$.

\begin{corollary}\label{lower bound for survival}
    If $a\geq 1$ and $\alpha,$ $b>0$, then the following inequality is valid
    \begin{equation}
        S_{a, b, \alpha}(x) \geq \frac{2}{\Gamma(a)\Gamma(b)}\left(\frac{abx}{\alpha}\right)^{\frac{a+b-1}{2}}K_{a-b-1}\left(2\sqrt{\frac{abx}{\alpha}}\right)
    \end{equation}
\end{corollary}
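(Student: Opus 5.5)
The plan is to start from the integral representation \eqref{integral_rep_S} of Theorem \ref{integral representation theorem} and bound the incomplete gamma function inside the integral from below by an elementary expression. After that substitution the remaining $s$-integral should be exactly of the form in \eqref{integral representation for kmu}, which produces a modified Bessel function of the second kind. Throughout, write $z=abx/\alpha$, so that
\[
S_{a,b,\alpha}(x)=\frac{1}{\Gamma(a)\Gamma(b)}\int_0^\infty e^{-s}s^{b-1}\Gamma\left(a,\frac{z}{s}\right){\rm d}s .
\]

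The first and key step is the elementary inequality $\Gamma(a,y)\geq y^{a-1}e^{-y}$ for $y>0$ and $a\geq 1$. The hypothesis $a\geq1$ makes $t\mapsto t^{a-1}$ nondecreasing, so $t^{a-1}\geq y^{a-1}$ for $t\geq y$, and hence
\[
\Gamma(a,y)=\int_y^\infty t^{a-1}e^{-t}\,{\rm d}t\geq y^{a-1}\int_y^\infty e^{-t}\,{\rm d}t=y^{a-1}e^{-y}.
\]
Inserting this with $y=z/s$, and using that the integrand in \eqref{integral_rep_S} is positive, gives
\[
S_{a,b,\alpha}(x)\geq \frac{z^{a-1}}{\Gamma(a)\Gamma(b)}\int_0^\infty s^{b-a}e^{-s-\frac{z}{s}}\,{\rm d}s .
\]

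Next, identify the remaining integral via \eqref{integral representation for kmu} with $\nu=a-b-1$, chosen so that $-\nu-1=b-a$. This gives
\[
\int_0^\infty s^{b-a}e^{-s-z/s}\,{\rm d}s=2z^{-\frac{a-b-1}{2}}K_{a-b-1}(2\sqrt z).
\]
Combining the powers of $z$, namely $a-1-\frac{a-b-1}{2}=\frac{a+b-1}{2}$, yields the claimed bound
\[
\frac{2}{\Gamma(a)\Gamma(b)}\left(\frac{abx}{\alpha}\right)^{\frac{a+b-1}{2}}K_{a-b-1}\left(2\sqrt{\frac{abx}{\alpha}}\right).
\]

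I do not expect a genuine obstacle. The only points to check are that the representation \eqref{integral representation for kmu} holds for every real order (so a negative $\nu=a-b-1$ causes no trouble, consistent with $K_{-\nu}=K_\nu$), and that the integrals converge, which follows from the factor $e^{-s-z/s}$ for $z>0$. Equality is attained at $a=1$, where $\Gamma(1,y)=e^{-y}$.
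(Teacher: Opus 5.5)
Your proposal is correct and follows essentially the same route as the paper: insert the bound $\Gamma(a,y)\geq y^{a-1}e^{-y}$ (valid for $a\geq 1$) into \eqref{integral_rep_S}, then identify the remaining $s$-integral via \eqref{integral representation for kmu} with order $a-b-1$. The only difference is that you prove the incomplete gamma bound directly from the monotonicity of $t\mapsto t^{a-1}$, whereas the paper cites it from the literature.
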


\begin{proof}[\bf Proof of Corollary \ref{lower bound for survival}]
By using the following bound for the incomplete gamma function (see \cite[Proposition 2.7]{Pi20})
\begin{equation*}
\Gamma(a, x) \geq x^{a-1} e^{-x},\quad \mbox{for}\quad a\geq 1
\end{equation*}
in the integral representation \eqref{integral_rep_S}, we have that
\begin{eqnarray*}
S_{a, b, \alpha}(x) \geq \frac{1}{\Gamma(a)\Gamma(b)}\left(\frac{abx}{\alpha}\right)^{a-1}\int_{0}^{\infty}e^{-s-\frac{(2\sqrt{abx/\alpha})^2}{4s}} \frac{1}{s^{a-b-1+1}} {\rm d}s.
\end{eqnarray*}
In view of the integral representation of modified Bessel function of second kind \eqref{integral representation for kmu}, we obtain that
\begin{equation*}
S_{a, b, \alpha}(x) \geq \frac{2}{\Gamma(a)\Gamma(b)}\left(\frac{abx}{\alpha}\right)^{a-1}\left(\frac{\alpha}{abx}\right)^{\frac{a-b-1}{2}} K_{a-b-1}\left(2\sqrt{\frac{abx}{\alpha}}\right).
\end{equation*}
Consequently, we arrive at the desired lower bound for $S_{a, b, \alpha}(x)$.
\end{proof}

The following corollary presents a lower bound for $S_{a, b, \alpha}(x)$ in terms of incomplete gamma function.

\begin{corollary}\label{another lower bound survival}
Let $a-b\geq \frac{1}{2}$ and $\alpha>0$. Then the following inequality is true.
\begin{equation}\label{lower_bound_S_2}
S_{a, b, \alpha}(x) > \frac{4^{1-b}\Gamma(a-b)}{2\Gamma(a)\Gamma(b)}\Gamma\left(2b, 2\sqrt{\frac{ab}{\alpha}x}\right).
\end{equation}
\end{corollary}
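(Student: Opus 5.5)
The plan is to reduce the survival function to an integral of $u^{a+b-1}K_{a-b}(u)$ and then use a pointwise lower bound for $K_\nu$ that turns the integrand into $u^{2b-1}e^{-u}$. The main step is that lower bound, which comes from a monotonicity property of $u^\nu e^uK_\nu(u)$.

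\emph{Step 1 (change of variables).} Put $c=ab/\alpha$ and substitute $u=2\sqrt{ct}$ in \eqref{survival introduction}. Then $t=u^2/(4c)$ and ${\rm d}t=u\,{\rm d}u/(2c)$. The powers of $c$ cancel, and a direct computation gives
$$S_{a,b,\alpha}(x)=\frac{4^{1-\frac{a+b}{2}}}{\Gamma(a)\Gamma(b)}\int_{2\sqrt{cx}}^{\infty}u^{a+b-1}K_{a-b}(u)\,{\rm d}u .$$

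\emph{Step 2 (pointwise lower bound for $K_\nu$).} Let $\nu=a-b\geq \frac12$. I will show that
$$K_\nu(u)\geq 2^{\nu-1}\Gamma(\nu)\,u^{-\nu}e^{-u},\qquad u>0.$$
Set $\varphi(u)=u^{\nu}e^{u}K_\nu(u)$. The asymptotic relation \eqref{asymptotic kmu at zero} gives $\varphi(0^+)=2^{\nu-1}\Gamma(\nu)$. The recurrence \eqref{recurrence Bessel} gives
$$\varphi'(u)=u^\nu e^u\bigl(K_\nu(u)+K_\nu'(u)+\tfrac{\nu}{u}K_\nu(u)\bigr)=u^\nu e^u\bigl(K_\nu(u)-K_{\nu-1}(u)\bigr).$$
For fixed $u>0$, the map $\mu\mapsto K_\mu(u)$ is even. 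It is also increasing in $|\mu|$, which follows from the representation $K_\mu(u)=\int_0^\infty e^{-u\cosh s}\cosh(\mu s)\,{\rm d}s$. For $\nu\geq\frac12$ we have $|\nu-1|\leq \nu$, hence $K_{\nu-1}(u)\leq K_\nu(u)$ and $\varphi'\geq 0$. Therefore $\varphi(u)\geq\varphi(0^+)$, which is the claimed bound. When $\nu>\frac12$ the inequality $|\nu-1|<\nu$ is strict, so $\varphi$ is strictly increasing and the bound is strict.

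\emph{Step 3 (assemble).} Insert the bound from Step 2 into Step 1. The integrand becomes $2^{\nu-1}\Gamma(\nu)\,u^{a+b-1-\nu}e^{-u}=2^{a-b-1}\Gamma(a-b)\,u^{2b-1}e^{-u}$. Its integral over $(2\sqrt{cx},\infty)$ is $\Gamma(2b,2\sqrt{cx})$. The constant is
$$4^{1-\frac{a+b}{2}}\,2^{a-b-1}=2^{1-2b}=\frac{4^{1-b}}{2},$$
which gives exactly \eqref{lower_bound_S_2}.

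\emph{Main obstacle.} The only delicate point is strictness in the boundary case $a-b=\frac12$. There $K_{1/2}(u)=\sqrt{\pi/(2u)}\,e^{-u}$ holds identically, so Step 2 becomes an equality and \eqref{lower_bound_S_2} holds with equality. In that case I would state the result as the non-strict inequality $\geq$, and keep the strict inequality for $a-b>\frac12$, where Step 2 is strict.
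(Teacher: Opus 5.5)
Your proof is correct and follows the paper's route: the paper inserts the same pointwise bound $K_\mu(u)>2^{\mu-1}\Gamma(\mu)u^{-\mu}e^{-u}$ into the integral and changes variables to reach $\Gamma(2b,2\sqrt{abx/\alpha})$, except that it cites the bound from Ismail, whereas you derive it from the monotonicity of $u^\nu e^u K_\nu(u)$ via $K_{\nu-1}\le K_\nu$. Your boundary-case observation is also right: for $a-b=\frac12$ one has $K_{1/2}(u)=\sqrt{\pi/(2u)}\,e^{-u}$, so equality holds there, and the corollary's strict inequality is valid only for $a-b>\frac12$.
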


\begin{proof}[\bf Proof of Corollary \ref{another lower bound survival}]
By using the following bound for the modified Bessel function of second kind (see \cite[Eq. 1.4]{Is90})
\begin{equation}
K_{\mu}(x)> 2^{\mu-1}\Gamma(\mu)x^{-\mu}e^{-x}, \quad \mu\geq 1/2,\quad \mbox{and}\quad x>0
\end{equation}
in $S_{a, b, \alpha}(x)$, we obtain that
\begin{align}
S_{a, b, \alpha}(x) &= \frac{2(ab)^{\frac{a+b}{2}}}{\Gamma(a)\Gamma(b)\alpha^{\frac{a+b}{2}}}\int_{x}^{\infty}t^{\frac{a+b}{2}-1}K_{a-b}\left(2\sqrt{\frac{ab}{\alpha}t}\right) {\rm d}t \nonumber\\
&> \frac{\Gamma(a-b)}{\Gamma(a)\Gamma(b)}\left(\frac{ab}{\alpha}\right)^{b}\int_{x}^{\infty} t^{b-1}e^{-2\sqrt{\frac{ab}{\alpha}t}} {\rm d}t.\label{bound_gamma}
\end{align}
By using the transformation $2\sqrt{{(ab)}/({\alpha}t)} = u$ in the inequality \eqref{bound_gamma} leads to the required inequality \eqref{lower_bound_S_2} for $S_{a, b, \alpha}(x)$.
\end{proof}

The following theorem provides both upper and lower bounds for $S_{a, b, \alpha}(x)$ which involve both modified Bessel function of second kind and incomplete gamma function.

\begin{theorem}\label{bounds_upper_lower_1_2}
Let $\alpha>0$. Then, the following statements are true.
\begin{enumerate}
\item[\bf a.] If $a-b>-\frac{1}{2}$, then
\begin{equation}\label{bounds_lower_1}
S_{a, b, \alpha}(x) \leq \frac{4^{1-a}}{\Gamma(a)\Gamma(b)}\left(\frac{abx}{\alpha}\right)^{\frac{b-a}{2}}e^{2\sqrt{abx/\alpha}}K_{a-b}\left(2\sqrt{abx/\alpha}\right)\Gamma\left(2a, 2\sqrt{abx/\alpha}\right)
\end{equation}
\item[\bf b.] If $a-b>\frac{1}{2}$, then
\begin{equation}  \label{bounds_lower_upper_1}
S_{a, b, \alpha}(x) \geq  \frac{4^{1-b}}{\Gamma(a)\Gamma(b)}\left(\frac{abx}{\alpha}\right)^{\frac{a-b}{2}}e^{2\sqrt{abx/\alpha}}K_{a-b}\left(2\sqrt{abx/\alpha}\right)\Gamma\left(2b, 2\sqrt{abx/\alpha}\right).
\end{equation}
\end{enumerate}
\end{theorem}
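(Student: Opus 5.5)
The plan is to reduce $S_{a,b,\alpha}$ to a tail integral of $u^{a+b-1}K_{a-b}(u)$, factor the integrand as a monotone Bessel-type function times an incomplete-gamma kernel, and bound the monotone factor by its value at the lower limit. Throughout, write $\nu=a-b$ and $z=2\sqrt{abx/\alpha}$.

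\textbf{Step 1 (change of variables).} In \eqref{survival introduction} substitute $u=2\sqrt{abt/\alpha}$, so that
\[
t=\frac{\alpha u^2}{4ab}, \qquad {\rm d}t=\frac{\alpha u}{2ab}\,{\rm d}u .
\]
All powers of $ab/\alpha$ cancel, and a routine computation should give
\[
S_{a,b,\alpha}(x)=\frac{2^{2-a-b}}{\Gamma(a)\Gamma(b)}\int_z^\infty u^{a+b-1}K_{\nu}(u)\,{\rm d}u .
\]
Both inequalities then become statements about this single integral.

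\textbf{Step 2 (factorization and the two monotone functions).} There are two natural ways to split the integrand:
\[
u^{a+b-1}K_\nu(u)=g(u)\,u^{2a-1}e^{-u}=h(u)\,u^{2b-1}e^{-u},
\]
where
\[
g(u)=u^{-\nu}e^{u}K_\nu(u), \qquad h(u)=u^{\nu}e^uK_\nu(u).
\]
For $g$, use the recurrence $K_\nu'(u)=-K_{\nu+1}(u)+\frac{\nu}{u}K_\nu(u)$ (equivalent to \eqref{recurrence Bessel} via the standard relation $K_{\nu+1}-K_{\nu-1}=\frac{2\nu}{u}K_\nu$). The $\nu/u$ terms cancel and one gets
\[
\frac{g'(u)}{g(u)}=1-\frac{K_{\nu+1}(u)}{K_\nu(u)}.
\]
For $h$, \eqref{recurrence Bessel} directly gives
\[
\frac{h'(u)}{h(u)}=1-\frac{K_{\nu-1}(u)}{K_\nu(u)}.
\]

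\textbf{Step 3 (the key fact).} For fixed $u>0$, the map $\mu\mapsto K_\mu(u)$ is even and strictly increasing in $|\mu|$. This follows from \eqref{integral representation for kmu} or from $K_\mu(u)=\int_0^\infty e^{-u\cosh s}\cosh(\mu s)\,{\rm d}s$.
\begin{itemize}
\item Since $|\nu+1|>|\nu|$ exactly when $\nu>-\frac12$, under the hypothesis of part \textbf{a} we get $K_{\nu+1}>K_\nu$. Hence $g$ is decreasing on $(0,\infty)$.
\item Since $|\nu-1|<|\nu|$ exactly when $\nu>\frac12$, under the hypothesis of part \textbf{b} we get $K_{\nu-1}<K_\nu$. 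Hence $h$ is increasing.
\end{itemize}
This matching of the parameter thresholds $\pm\frac12$ is the crux of the argument. Identifying the right factorization so that the thresholds align is the only real obstacle; everything else is bookkeeping.

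\textbf{Step 4 (conclusion).} For part \textbf{a}, use $g(u)\le g(z)$ for $u\ge z$ to obtain
\[
\int_z^\infty u^{a+b-1}K_\nu(u)\,{\rm d}u\le g(z)\,\Gamma(2a,z).
\]
For part \textbf{b}, use $h(u)\ge h(z)$ to obtain the reverse inequality with $h(z)\,\Gamma(2b,z)$.

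It remains to check the constants, using $z^{\mp\nu}=2^{\mp\nu}\,(abx/\alpha)^{\mp\nu/2}$. In part \textbf{a},
\[
2^{2-a-b}z^{-\nu}=4^{1-a}\left(\frac{abx}{\alpha}\right)^{\frac{b-a}{2}},
\]
and in part \textbf{b},
\[
2^{2-a-b}z^{\nu}=4^{1-b}\left(\frac{abx}{\alpha}\right)^{\frac{a-b}{2}}.
\]
These reproduce exactly the right-hand sides of \eqref{bounds_lower_1} and \eqref{bounds_lower_upper_1}.
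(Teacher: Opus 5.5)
Your proof is correct and follows the same route as the paper: bound the monotone factor $e^{u}u^{\mp\nu}K_\nu(u)$ by its value at the lower limit, then integrate against the kernel $u^{2a-1}e^{-u}$ (respectively $u^{2b-1}e^{-u}$) to get the incomplete gamma function, with the constants working out exactly as you computed. The only difference is that the paper quotes the monotonicity of $e^{u}u^{-\nu}K_\nu(u)$ for $\nu>-\tfrac12$ and of $e^{u}u^{\nu}K_\nu(u)$ for $\nu>\tfrac12$ from the literature and substitutes $u=2\sqrt{abt/\alpha}$ at the end, whereas you derive that monotonicity from the recurrences and the monotonicity of $K_\mu(u)$ in $|\mu|$, which makes your argument self-contained.
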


\begin{proof}[\bf Proof of Theorem \ref{bounds_upper_lower_1_2}]
Recall that in \cite[Lemma 4]{BBSV21}, it was established that if $\mu>-\frac{1}{2}$, then $x \mapsto e^{x}x^{-\mu}K_{\mu}(x)$ is decreasing on $(0, \infty)$. Consequently, when $x<t$, we have that
\begin{eqnarray*}
e^{2\sqrt{abt/\alpha}}\left(2\sqrt{abt/\alpha}\right)^{b-a}K_{a-b}\left(2\sqrt{abt/\alpha}\right)&\leq&
e^{2\sqrt{abx/\alpha}}\left(2\sqrt{abx/\alpha}\right)^{b-a}K_{a-b}\left(2\sqrt{abx/\alpha}\right)\\
t^{\frac{a+b}{2}-1}K_{a-b}\left(2\sqrt{abt/\alpha}\right)&\leq&e^{2\sqrt{abx/\alpha}}e^{-2\sqrt{abt/\alpha}}x^{\frac{b-a}{2}}t^{a-1}K_{a-b}\left(2\sqrt{abx/\alpha}\right).
\end{eqnarray*}
By using the above inequality in $S_{a, b, \alpha}(x)$, we arrive at
\begin{align}
S_{a, b, \alpha}(x) &= \frac{2(ab)^{\frac{a+b}{2}}}{\Gamma(a)\Gamma(b)\alpha^{\frac{a+b}{2}}}\int_{x}^{\infty}t^{\frac{a+b}{2}-1}K_{a-b}\left(2\sqrt{\frac{ab}{\alpha}t}\right) {\rm d}t\nonumber\\
&\leq \frac{2(ab)^{\frac{a+b}{2}}}{\Gamma(a)\Gamma(b)\alpha^{\frac{a+b}{2}}}e^{2\sqrt{abx/\alpha}}x^{\frac{b-a}{2}}K_{a-b}\left(2\sqrt{abx/\alpha}\right)\int_{x}^{\infty}t^{a-1}e^{-2\sqrt{abt/\alpha}} {\rm d}t.\label{lower_bound_relation_1}
\end{align}
By using the transformation $u=2\sqrt{abt/\alpha}$ in the inequality \eqref{lower_bound_relation_1} leads to the required inequality \eqref{bounds_lower_1}.

Recall also that in \cite[Lemma 4]{BBSV21}, it was established that if $\mu>\frac{1}{2}$, then $x \mapsto e^{x}x^{\mu}K_{\mu}(x)$ is increasing on $(0, \infty)$. Consequently, when $t>x$, we have that
\begin{align*}
e^{2\sqrt{abt/\alpha}}\left(2\sqrt{abt/\alpha}\right)^{a-b}K_{a-b}\left(2\sqrt{abt/\alpha}\right)&\geq
e^{2\sqrt{abx/\alpha}}\left(2\sqrt{abx/\alpha}\right)^{a-b}K_{a-b}\left(2\sqrt{abx/\alpha}\right)\\
t^{\frac{a+b}{2}-1}K_{a-b}\left(2\sqrt{abt/\alpha}\right)&\geq e^{2\sqrt{abx/\alpha}}e^{-2\sqrt{abt/\alpha}}x^{\frac{a-b}{2}}t^{b-1}K_{a-b}\left(2\sqrt{abx/\alpha}\right).
\end{align*}
By using the above inequality in $S_{a, b, \alpha}(x)$, we arrive at
\begin{align}
S_{a, b, \alpha}(x) &= \frac{2(ab)^{\frac{a+b}{2}}}{\Gamma(a)\Gamma(b)\alpha^{\frac{a+b}{2}}}\int_{x}^{\infty}t^{\frac{a+b}{2}-1}K_{a-b}\left(2\sqrt{\frac{ab}{\alpha}t}\right) {\rm d}t\nonumber\\
&\geq \frac{2(ab)^{\frac{a+b}{2}}}{\Gamma(a)\Gamma(b)\alpha^{\frac{a+b}{2}}}e^{2\sqrt{abx/\alpha}}x^{\frac{a-b}{2}}K_{a-b}\left(2\sqrt{abx/\alpha}\right)\int_{x}^{\infty}t^{b-1}e^{-2\sqrt{abt/\alpha}} {\rm d}t.\label{upper_bound_relation_2}
\end{align}
By using the transformation $u=2\sqrt{abt/\alpha}$ in the inequality \eqref{upper_bound_relation_2} leads to the inequality \eqref{bounds_lower_upper_1}.
\end{proof}

Since $$S_{a, b, \alpha}(x) = \int_{x}^{\infty}f_{a, b, \alpha}(t) {\rm d}t = \frac{2(ab)^{\frac{a+b}{2}}}{\Gamma(a)\Gamma(b)\alpha^{\frac{a+b}{2}}}\int_{x}^{\infty}t^{\frac{a+b}{2}-1}K_{a-b}\left(2\sqrt{\frac{ab}{\alpha}t}\right) {\rm d}t$$
the change of variables $2\sqrt{\frac{ab}{\alpha}t} = s$ and in view of $K_{-\nu}(x) = K_{\nu}(x)$, we have that
$$S_{a, b, \alpha}(x) = \frac{2^{2-a-b}}{\Gamma(a)\Gamma(b)}\int_{z}^{\infty}s^{a+b-1}K_{\nu}(s) {\rm d}s,$$ where $z = 2\sqrt{\frac{ab}{\alpha}x}$ and $\nu = |a-b|$. Now, define $\nu_{-} = \min\{\nu, 1/2\}$ and $\nu_{+} = \max \{\nu, 1/2\}$. Then for $r \geq s >0$ we arrive at
\begin{equation}\label{inequality modified ratio}
e^{-(r-s)}\left(\frac{s}{r}\right)^{\nu_{+}}\leq \frac{K_{\nu}(r)}{K_{\nu}(s)} \leq e^{-(r-s)}\left(\frac{s}{r}\right)^{\nu_{-}}.
\end{equation}
For this recall that when $\nu > 1/2$, $x \mapsto e^{x}x^{\nu}K_{\nu}(x)$ is increasing on $(0, \infty)$ and when $\nu \in [0, 1/2)$, the same function is decreasing; whereas the function $x \mapsto e^{x}\sqrt{x}K_{\nu}(x)$ has the opposite monotonicity (see \cite[p. 589]{Ba10}). When $\nu = 1/2$, both statements reduce to the particular case when we have a constant, since $K_{1/2}(s) = \sqrt{{\pi}/{2 s}}\cdot e^{-s}$.

\begin{theorem}\label{bounds_monotone}
    Let $a,$ $b,$ $\alpha,$ $x>0$ and let $z = 2\sqrt{{abx}/{\alpha}}$, $\nu = \mid a-b \mid$, $\nu_{+}$ and $\nu_{-}$ as before. Then, we have the following lower and upper bounds
    $$\frac{2^{2-a-b}}{\Gamma(a)\Gamma(b)}e^{z}K_{\nu}(z)z^{\nu_{+}}\Gamma(a+b-\nu_{+}, z) \leq S_{a, b, \alpha}(x) \leq \frac{2^{2-a-b}}{\Gamma(a)\Gamma(b)}e^{z}K_{\nu}(z)z^{\nu_{-}}\Gamma(a+b-\nu_{-}, z).$$ The inequalities are strict when $\nu \neq 1/2$. If $\nu = 1/2$, then the lower and upper bounds coincide with the survival function.
\end{theorem}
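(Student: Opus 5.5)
We use the representation derived just before the statement,
$$S_{a,b,\alpha}(x)=\frac{2^{2-a-b}}{\Gamma(a)\Gamma(b)}\int_z^\infty s^{a+b-1}K_\nu(s)\,{\rm d}s,\qquad z=2\sqrt{abx/\alpha},\ \nu=|a-b|,$$
and substitute the two-sided ratio bound \eqref{inequality modified ratio} with $r=s$ (the integration variable) and $s$ replaced by $z$. For $s\ge z$ this gives
$$e^{z}K_\nu(z)\,z^{\nu_+}e^{-s}s^{-\nu_+}\le K_\nu(s)\le e^{z}K_\nu(z)\,z^{\nu_-}e^{-s}s^{-\nu_-}.$$
Multiplying by $s^{a+b-1}$ and integrating over $(z,\infty)$ yields
$$\int_z^\infty s^{a+b-1-\nu_\pm}e^{-s}\,{\rm d}s=\Gamma(a+b-\nu_\pm,z),$$
and after the common prefactor $2^{2-a-b}/(\Gamma(a)\Gamma(b))$ we obtain exactly the claimed lower and upper bounds.

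The integrals converge because $a+b-\nu_\pm>0$. Indeed, $a+b-\nu=2\min\{a,b\}>0$, and $a+b-\tfrac12>0$ whenever $a+b>\tfrac12$. If $a+b\le\tfrac12$, then $\nu<\tfrac12$, so $\nu_+=\tfrac12$ and $a+b-\nu_+\le0$. In that case the incomplete gamma function $\Gamma(\cdot,z)$ with $z>0$ is still finite, since only the tail at infinity matters. So the bounds make sense in every case.

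The step that needs the most care is justifying \eqref{inequality modified ratio} itself. We rely on the monotonicity results cited from \cite{Ba10,BBSV21}, summarized before the theorem.

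\textbf{Case $\nu>1/2$.} Here $\nu_+=\nu$ and $\nu_-=\tfrac12$. The function $x\mapsto e^x x^\nu K_\nu(x)$ is increasing, which gives the lower bound. The function $x\mapsto e^x\sqrt{x}K_\nu(x)$ is decreasing, which gives the upper bound.

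\textbf{Case $\nu<1/2$.} The roles are swapped, in agreement with $\nu_+=\tfrac12$ and $\nu_-=\nu$.

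\textbf{Strictness.} For $\nu\ne1/2$ the monotonicity is strict, so the pointwise inequalities are strict for $s>z$. Integrating over a set of positive measure then makes the bounds on $S_{a,b,\alpha}(x)$ strict.

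\textbf{Case $\nu=1/2$.} Here $K_{1/2}(s)=\sqrt{\pi/(2s)}\,e^{-s}$, so both ratio bounds are equalities. Hence both bounds coincide with $S_{a,b,\alpha}(x)$. As a check, $e^{z}K_{1/2}(z)z^{1/2}=\sqrt{\pi/2}$, and the formula reduces to $\frac{2^{2-a-b}}{\Gamma(a)\Gamma(b)}\sqrt{\pi/2}\,\Gamma(a+b-\tfrac12,z)$, which is the exact value of the integral.
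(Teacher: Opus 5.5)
Your proposal is correct and follows the paper's proof. Like the paper, you insert the two-sided ratio bound \eqref{inequality modified ratio} with $s=z$ into the Bessel-integral form of $S_{a,b,\alpha}$, multiply by $s^{a+b-1}$ and integrate to obtain $\Gamma(a+b-\nu_\pm,z)$, with $K_{1/2}$ giving equality. Your added case analysis for $\nu_\pm$ and the strictness argument are fine. One slip: the claim $a+b-\nu_\pm>0$ is false when $a+b\le 1/2$, but it is also unnecessary, since, as you note yourself, $\Gamma(c,z)$ is finite for every real $c$ once $z>0$.
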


\begin{proof}[\bf Proof of Theorem \ref{bounds_monotone}]
By using the inequality \eqref{inequality modified ratio} for $s=z$ on the ratio of quotient $K_{\nu}(r)/K_{\nu}(s)$, multiplication by $s^{a+b-1}$ and integration yields
$$\int_{z}^{\infty}r^{a+b-1}K_{\nu}(r) {\rm d}r \geq e^{z}K_{\nu}(z)z^{\nu_{+}}\int_{z}^{\infty}r^{a+b-\nu_{+}-1}e^{-r} {\rm d}r = e^{z}K_{\nu}(z)z^{\nu_{+}}\cdot \Gamma(a+b-\nu_{+}, z),$$ which after multiplication by $2^{2-a-b}/(\Gamma(a)\Gamma(b))$ is exactly the stated lower bound. The upper bound follows identically from inequality \eqref{inequality modified ratio} with exponent $\nu_{-}$. When $\nu = 1/2$, the ratio $$\frac{K_{1/2}(r)}{K_{1/2}(z)} = e^{-(r-z)}\left(\frac{z}{r}\right)^{1/2}$$ is exact and hence both integrations implies the equality.
\end{proof}

\begin{remark}
{\em The functional inequalities in \cite{BPV11} implies that $x \mapsto \eta(x) = {xf_{a, b, \alpha}'(x)}/{f_{a, b, \alpha}(x)}$ is decreasing, or equivalently the gamma-gamma distribution is geometrically concave. Now, observe that
$$\eta(x) = \frac{a+b}{2}-1+\frac{1}{2} \frac{zK'_{\nu}(z)}{K_{\nu}(z)}, \quad z = 2\sqrt{{abx}/{\alpha}}, \quad \nu = \mid a-b \mid.$$ On the other hand, if $g_{a, b, \alpha}(x) = \log f_{a,b, \alpha}(e^x)$, then clearly $$g'_{a, b, \alpha}(x) = e^x \cdot f_{a, b, \alpha}'(e^x)/f_{a, b, \alpha}(e^x) = \eta(e^x).$$ Thus, $\eta(u)$ decreasing in $u$ means that $g'_{a, b, \alpha}(x)$ is decreasing in $x$, so $g_{a, b, \alpha}(x)$ is concave. This is precisely what is meant by $f_{a, b, \alpha}$ being geometrically concave. For a concave function, its graph lies below every tangent line, that is
$$g_{a, b, \alpha}(y) \leq g_{a, b, \alpha}(x) + g'_{a, b, \alpha}(x) (y-x).$$ Now, by choosing $x = \log u$ and $y = \log t$, we arrive at
$$\log f_{a, b, \alpha}(t) \leq \log f_{a, b, \alpha}(u) + \eta(u)(\log t-\log u),$$ which immediately implies that $$f_{a, b, \alpha}(t) \leq f_{a, b, \alpha}(u) (t/u)^{\eta(u)}$$ whenever $\eta(u) < -1.$ Integrating the above inequality over $(u, \infty)$ yields
$$S_{a, b, \alpha}(u) = \int_{u}^{\infty}f_{a, b, \alpha}(t) {\rm d}t \leq f_{a, b, \alpha}(u)\int_{u}^{\infty}\left(\frac{t}{u}\right)^{\eta(u)} {\rm d}t.$$ Now, by using the change of variables $r = t/u$, we arrive at
$$S_{a, b, \alpha}(u) \leq uf_{a, b, \alpha}(u)\int_{1}^{\infty}\left(r\right)^{\eta(u)} {\rm d}r = uf_{a, b, \alpha}(u)\left(\frac{-1}{\eta(u)+1}\right)=\frac{uf_{a, b, \alpha}(u)}{-1-\eta(u)},$$ where we suppose that $-1-\eta(u)>0$ and hence we have $\lim_{r \to \infty}{r^{\eta(u)+1}}=0$. The well-known logarithmic derivative inequality in \cite{BPV11}
$$\frac{zK'_{\nu}(z)}{K_{\nu}(z)} < -\sqrt{z^2+\nu^2}, \quad z>0$$ implies that
$$\eta (u)<\frac{a+b}{2}-1-\frac{1}{2}\sqrt{z^2+\nu^2}.$$ Since $z^2+\nu^2-(a+b)^2=4ab\left(\frac{x}{\alpha}-1\right)$, we have $\eta(u)<-1$ whenever $x > \alpha$. Therefore, we arrive at $$S_{a, b, \alpha}(x) < \frac{2xf_{a, b, \alpha}(x)}{\sqrt{4abx/\alpha+(a-b)^2}-(a+b)}, \quad x>\alpha.$$ This upper bound is simpler than that the incomplete gamma function bound and involves only the gamma-gamma density and elementary operations. It is also asymptotically sharp as $x \to \infty$.}
\end{remark}

\section{\bf Series representation for $S_{a, b, \alpha}(x)$}
In this section, we present two series representations for $S_{a,b,\alpha}(x)$, corresponding to the cases $a-b=n+\frac{1}{2}$ and $a-b\neq n+\frac{1}{2}$, where $n \in \{0, 1, 2, \dots\}$. The derivations are based on series representations of the modified Bessel function of the second kind given in \cite[p. 72]{MOS66} and \cite[eq. 6]{Molu}.

\begin{theorem} \label{series representation one}

For $\alpha>0$ the closed form representation  for  $S_{a, b, \alpha}(x)$ when $a-b = n+1/2$ is as follows
$$ S_{a, b, \alpha}(x)=\sqrt{\frac{\pi}{2}} \frac{2^{\frac{5}{2}-2 a}}{\Gamma(a)\Gamma(b)} \sum_{k=0}^{a-b-1/2} \frac{\Gamma(2a-2b-k)}{\Gamma(a-b-k+1/2)\Gamma(k+1)2^{-k}} \Gamma\left(2b+k, 2\sqrt{\frac{ab}{\alpha}x}\right).$$
\end{theorem}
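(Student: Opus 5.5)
Set $\nu=a-b=n+\tfrac12$ with $n\in\{0,1,2,\dots\}$ and $z=2\sqrt{abx/\alpha}$. The plan is to use the reduction already written just before Theorem \ref{bounds_monotone}: the substitution $s=2\sqrt{abt/\alpha}$ in \eqref{survival introduction} gives
$$S_{a,b,\alpha}(x)=\frac{2^{2-a-b}}{\Gamma(a)\Gamma(b)}\int_z^\infty s^{a+b-1}K_{\nu}(s)\,{\rm d}s .$$
For half-integer order we insert the finite closed form of the Macdonald function (see \cite[p. 72]{MOS66}):
$$K_{n+1/2}(s)=\sqrt{\frac{\pi}{2s}}\,e^{-s}\sum_{k=0}^{n}\frac{(n+k)!}{k!\,(n-k)!\,(2s)^k}.$$
Since the sum is finite, we may integrate term by term without any convergence issue. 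Each term is proportional to $\int_z^\infty s^{a+b-3/2-k}e^{-s}\,{\rm d}s=\Gamma(a+b-\tfrac12-k,z)$.

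The next step is to put the result in the form of the statement. Because $a+b-\tfrac12=2b+n$, reindexing by $j=n-k$ turns the exponents into $\Gamma(2b+j,z)$ with $j=0,\dots,n$; here $n=a-b-\tfrac12$ is exactly the upper summation limit in the statement. The coefficients become
$$\frac{(2n-j)!}{j!\,(n-j)!\,2^{n-j}}.$$
Using $2n-j=2a-2b-1-j$ and $n-j=a-b-j-\tfrac12$, this equals
$$\frac{\Gamma(2a-2b-j)\,2^{j}}{\Gamma(j+1)\,\Gamma(a-b-j+\tfrac12)\,2^{n}},$$
which matches the displayed summand $\Gamma(2a-2b-k)/\bigl(\Gamma(a-b-k+1/2)\Gamma(k+1)2^{-k}\bigr)$.

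The main obstacle is bookkeeping the powers of $2$. These come from three sources: the factor $2^{2-a-b}$, the factor $\sqrt{\pi/2}$ together with $s^{-1/2}$, and the leftover $2^{-n}=2^{1/2-a+b}$. I will check that their product is $\sqrt{\pi/2}\,2^{5/2-2a}$, and it is, since $2-a-b+\tfrac12-a+b=\tfrac52-2a$. As a sanity check, the case $n=0$ should reproduce the equality case $\nu=1/2$ of Theorem \ref{bounds_monotone}.
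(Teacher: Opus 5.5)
Your proposal is correct and follows the paper's route: insert the finite closed form of $K_{n+1/2}$, integrate term by term to obtain upper incomplete gamma functions, and then match the coefficients and the powers of $2$ (your bookkeeping $2^{2-a-b}\cdot 2^{-n}=2^{5/2-2a}$ with $n=a-b-\tfrac12$ checks out). The differences are cosmetic: the paper integrates directly in $t$, writing the polynomial in ascending powers so that no reindexing is needed, whereas you first substitute $s=2\sqrt{abt/\alpha}$ and then reindex with $j=n-k$.
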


\begin{proof}[\bf Proof of Theorem \ref{series representation one}]
Note that the modified Bessel function of second kind $K_{\mu}(x)$, for $\mu=n+1/2$, $n\in \{0, 1, 2, \dots \},$ has the following representation (see \cite[p. 72]{MOS66})
\begin{equation}\label{kmu_rep_1}
K_{n+\frac{1}{2}}(y)=\sqrt{\dfrac{\pi}{2x}}\dfrac{{\rm e}^{-x}}{x^n}H_n(y),
\end{equation}
where $H_n$ is a monic polynomial of degree $n$ defined as $$H_n(y)=\sum_{k=0}^n\dfrac{\Gamma(2n-k+1)\,y^k}{\Gamma(n-k+1)\,\Gamma(k+1)\,2^{n-k}}.$$
For the case $a-b = n+\frac{1}{2}$, by using the representation \eqref{kmu_rep_1} in $ S_{a, b, \alpha}(x)$, we arrive at
\begin{align*}
S_{a, b, \alpha}(x) =& \frac{2(ab)^{\frac{a+b}{2}}}{\Gamma(a)\Gamma(b)\alpha^{\frac{a+b}{2}}} \int_{x}^{\infty} t^{\frac{a+b}{2}-1} K_{a-b}\left(2\sqrt{\frac{ab}{\alpha}t}\right){\rm d}t \\
=& \sqrt{\frac{\pi}{2}} \frac{2^{2b-2a+\frac{3}{2}}}{\Gamma(a)\Gamma(b)} \left(\frac{ab}{\alpha}\right)^{b}\sum_{k=0}^{a-b-1/2} \frac{\Gamma(2a-2b-k)}{\Gamma(a-b-k+1/2)\Gamma(k+1)2^{-2k}} \left(\sqrt{\frac{ab}{\alpha}}\right)^{k} \int_{x}^{\infty} t^{b-1+\frac{k}{2}} {\rm e}^{-2\sqrt{\frac{abt}{\alpha}}}{\rm d}t \\
=& \sqrt{\frac{\pi}{2}} \frac{2^{\frac{5}{2}-2 a}}{\Gamma(a)\Gamma(b)} \sum_{k=0}^{a-b-1/2} \frac{\Gamma(2a-2b-k)}{\Gamma(a-b-k+1/2)\Gamma(k+1)2^{-k}} \Gamma\left(2b+k, 2\sqrt{\frac{ab}{\alpha}x}\right).
\end{align*}
\end{proof}

\begin{theorem}\label{series representation two}
For $\alpha>0$ the closed form representation  for  $S_{a, b, \alpha}(x)$ when $a-b \neq n+1/2$ is as follows
  \begin{align*}
S_{a,b,\alpha}(x)&=\frac{2^{2-2a}\sqrt{\pi}\Gamma(2a-2b)}{\Gamma(a)\Gamma(b)
\Gamma\left(\frac{1}{2}-a+b\right)}\sum_{n=0}^{\infty}\sum_{k=0}^{n}\frac{(-1)^k
\Gamma\left(\frac{1}{2}+n-a+b\right)L(n,k)}{\Gamma\left(\frac{1}{2}+n+a-b\right)n!}\Gamma\left(2b+k,2\sqrt{\frac{ab}{\alpha}x}\right).
\end{align*}
\end{theorem}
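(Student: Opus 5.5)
The plan is to repeat the scheme of the proof of Theorem \ref{series representation one}, replacing the finite expansion \eqref{kmu_rep_1} with the infinite series for $K_\mu$ from \cite[eq. 6]{Molu}. Put $\mu=a-b$. I will use that series in the form
\begin{equation*}
K_{\mu}(y)=\frac{\sqrt{\pi}\,\Gamma(2\mu)}{\Gamma\left(\frac12-\mu\right)}\,(2y)^{-\mu}e^{-y}\sum_{n=0}^{\infty}\frac{\Gamma\left(\frac12+n-\mu\right)}{\Gamma\left(\frac12+n+\mu\right)\,n!}\sum_{k=0}^{n}(-1)^kL(n,k)\,(2y)^{k},
\end{equation*}
where $L(n,k)$ denote the coefficients of the Laguerre-type polynomials appearing there. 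This should come from $K_\mu(y)=\sqrt{\pi}(2y)^{\mu}e^{-y}\psi(\mu+\tfrac12,2\mu+1,2y)$ together with the Laguerre expansion of the Tricomi function. The hypothesis $\mu\neq n+\frac12$ guarantees that $\Gamma(\frac12-\mu)$ is finite and nonzero. The prefactor $\Gamma(2\mu)$ implicitly requires $a>b$; the case $a<b$ should reduce to this one via $S_{a,b,\alpha}=S_{b,a,\alpha}$.

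First, I substitute $y=2\sqrt{abt/\alpha}$ into \eqref{survival introduction}. Exactly as in the proof of Theorem \ref{series representation one}, the factor $t^{\frac{a+b}{2}-1}y^{-\mu}$ combines into a constant times $t^{b-1}$. Each monomial $y^{k}$ then contributes $t^{b-1+k/2}e^{-2\sqrt{abt/\alpha}}$.

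Second, I change variables with $u=2\sqrt{abt/\alpha}$, so that ${\rm d}t=\frac{\alpha}{2ab}u\,{\rm d}u$ and $t^{b-1+k/2}$ becomes a power of $u$. This turns each term into $\int_z^\infty u^{2b+k-1}e^{-u}{\rm d}u=\Gamma(2b+k,z)$ with $z=2\sqrt{abx/\alpha}$. Collecting the powers of $2$ and of $ab/\alpha$ should give exactly the prefactor $2^{2-2a}\sqrt{\pi}\,\Gamma(2a-2b)/(\Gamma(a)\Gamma(b)\Gamma(\frac12-a+b))$, in the same way that the constant $2^{\frac52-2a}$ arose in Theorem \ref{series representation one}. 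This bookkeeping is routine but must be checked carefully against the normalisation of $L(n,k)$ in \cite{Molu}.

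The main obstacle is justifying the interchange of $\int_x^\infty$ with the double sum $\sum_n\sum_k$. The inner sums alternate, so one cannot appeal to monotone convergence. My first attempt is to regroup by $n$: integrate the $n$-th Laguerre block as a single polynomial times $u^{2b-1}e^{-u}$. I would then use a uniform bound of the type $|e^{-u/2}L_n^{(\beta)}(u)|\le C\binom{n+\beta}{n}$ (Szeg\H{o}'s bound) together with the ratio $\Gamma(\frac12+n-\mu)/\Gamma(\frac12+n+\mu)\sim n^{-2\mu}$. This yields a dominating series whose integral against $u^{2b-1}e^{-u/2}$ is finite, so dominated convergence applies to the sum over $n$. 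If the Szeg\H{o}-type bound is not strong enough for small $\mu$, the fallback is to prove the identity first for $\mu$ in a range where convergence is absolute, and then extend it by analytic continuation in $a$ while keeping $b$ and $x$ fixed.
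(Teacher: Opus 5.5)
Your route is the paper's: insert the expansion of $K_{a-b}$ from \cite[eq.~6]{Molu}, integrate term by term, and substitute $u=2\sqrt{abt/\alpha}$. The paper does the termwise integration with no justification at all. The two things you add beyond the paper both fail.

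\emph{Bookkeeping.} In your series the monomials are $(2y)^k$, not $y^k$, so the $(n,k)$ term becomes $2^{k-\mu}\int_z^\infty u^{2b+k-1}e^{-u}\,{\rm d}u=2^{k-\mu}\,\Gamma(2b+k,z)$. The factor $2^{-\mu}$ merges with $2^{2-a-b}$ into $2^{2-2a}$. The factor $2^{k}$, however, cannot be absorbed into a $k$-independent prefactor. What your computation proves is the displayed identity with $(-1)^k$ replaced by $(-2)^k$. The paper's own $\Lambda(\nu,n,k)$, which carries $2^{k-\nu}$, gives the same thing, and this is the correct version. Let $a-b\to\frac32$, where only $n\le 1$ survive: the displayed formula tends to $\frac{2^{2-2a}\sqrt{\pi}}{\Gamma(a)\Gamma(b)}\bigl[2\Gamma(2b,z)+\Gamma(2b+1,z)\bigr]$. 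Theorem~\ref{series representation one}, or a direct computation with $K_{3/2}$, gives $2\Gamma(2b+1,z)$ in the second slot. So the bookkeeping does not close onto the statement as printed.

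\emph{Interchange.} Since $\sum_{k}(-1)^kL(n,k)w^k=n!\,L_n^{(-1)}(w)$, the $n$-th block is, up to the common prefactor, $c_n\int_z^\infty u^{2b-1}e^{-u}L_n^{(-1)}(2u)\,{\rm d}u$ with $c_n=\Gamma(\frac12+n-\mu)/\Gamma(\frac12+n+\mu)\sim n^{-2\mu}$.
\begin{itemize}
\item Szeg\H{o}'s bound $|L_n^{(\beta)}(w)|\le\binom{n+\beta}{n}e^{w/2}$ requires $\beta\ge0$; at $\beta=-1$ it would force $L_n^{(-1)}\equiv0$. Via $L_n^{(-1)}(w)=-\frac{w}{n}L_{n-1}^{(1)}(w)$ it gives $e^{-w/2}|L_n^{(-1)}(w)|\le w$.
\item But the polynomial is evaluated at $2u$ against the weight $e^{-u}$, so this bound uses up the entire exponential. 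No factor $e^{-u/2}$ is left, and the majorant is of order $u^{2b}\sum_n|c_n|$, which is not integrable on $(z,\infty)$.
\end{itemize}

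Worse, on part of the stated range termwise integration is simply invalid, because the integrated blocks grow. For $b=1$ one finds exactly $\int_0^\infty u\,e^{-u}L_n^{(-1)}(2u)\,{\rm d}u=4(-1)^n n$. In general $\int_0^\infty u^{2b-1}e^{-u}L_n^{(-1)}(2u)\,{\rm d}u\sim(-1)^n2^{2b}n^{2b-1}$, while $\int_0^z$ stays bounded in $n$. Hence the $n$-th term of the series is of order $n^{2b-1-2(a-b)}$ for $b>\frac12$. The series therefore diverges whenever $2(a-b)\le 2b-1$, for example $a=4.2$, $b=3$, even though $a-b>1$ and $a-b\ne n+\frac12$.

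Your fallback cannot rescue this. Analytic continuation in $a$ cannot carry the identity into a region where the right-hand side diverges. Fubini on the double sum is never available either, because its $k=n$ terms alone have size about $2^n n^{2b-1-2(a-b)}$.

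A correct theorem needs extra hypotheses. It needs at least $a-b>1$, the range in which the paper's proof invokes \cite{Molu}. Your symmetry remark does not handle $a<b$: it yields the formula with $a$ and $b$ interchanged, not the displayed one. It also needs $2(a-b)>2b-1$. Within that range, the interchange must be justified using the decay and oscillation of $e^{-w/2}L_n^{(-1)}(w)$ in $n$, not Szeg\H{o}'s uniform bound.
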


\begin{proof}[\bf Proof of Theorem \ref{series representation two}]
 Note that the modified Bessel function of second kind $K_{\nu}(\beta x)$, for $\nu\notin\{0,\frac{1}{2},\frac{3}{2},\ldots\}$ and $\nu>1$ has the following infinite series representation (see \cite[eq. 6]{Molu})
 \begin{equation}\label{kmu_rep_2}
 K_{\nu}(\beta x)
=e^{-\beta x}\sum_{n=0}^{\infty}\sum_{k=0}^{n}\Lambda(\nu,n,k)(\beta x)^{k-\nu},
\end{equation}
where $\Lambda(\nu,n,k)=\dfrac{(-1)^{k}\sqrt{\pi}\Gamma(2\nu)\Gamma\left(\frac{1}{2}+n-\nu\right)L(n,k)}{2^{\nu-k}\Gamma\left(\frac{1}{2}-\nu\right)\Gamma\left(\frac{1}{2}+n+\nu\right)n!},$ and the Lah numbers are defined as $$L(n,k)=\binom{n-1}{k-1}\dfrac{n!}{k!}$$ with the conventions that $L(0,0)=1$, $L(n,0)=0$, and $L(n,1)=n!$. By using the series representation \eqref{kmu_rep_2} in $S_{a, b, \alpha}(x)$, we arrive at
\begin{align*}
S_{a,b,\alpha}(x)=&\frac{2}{\Gamma(a)\Gamma(b)}\left(\frac{ab}{\alpha}\right)^{\frac{a+b}{2}}\int_x^\infty t^{\frac{a+b}{2}-1}e^{-2\sqrt{\frac{ab}{\alpha}t}}
\sum_{n=0}^{\infty}\sum_{k=0}^{n}\Lambda(a-b,n,k)
\left(2\sqrt{\frac{ab}{\alpha}t}\right)^{k-a+b}\,{\rm d}t\\
=&\frac{2^{b-a+1}}{\Gamma(a)\Gamma(b)}\left(\frac{ab}{\alpha}\right)^{b}\sum_{n=0}^{\infty}\sum_{k=0}^{n}\Lambda(a-b,n,k)\left(\frac{ab}{\alpha}\right)^{\frac{k}{2}}
\int_x^\infty t^{\frac{a+b}{2}-1+\frac{k-a+b}{2}}e^{-2\sqrt{\frac{ab}{\alpha}t}}
\,{\rm d}t\\
=&\frac{2^{b-a+1}}{\Gamma(a)\Gamma(b)}\left(\frac{ab}{\alpha}\right)^{b}\sum_{n=0}^{\infty}\sum_{k=0}^{n}\Lambda(a-b,n,k) \left(\frac{ab}{\alpha}\right)^{\frac{k}{2}}
\int_x^\infty t^{b+\frac{k}{2}-1}e^{-2\sqrt{\frac{ab}{\alpha}t}}{\rm d}t.
\end{align*}
Note that
\begin{align*}
\int_x^\infty t^{b-1+\frac{k}{2}}
e^{-2\sqrt{\frac{ab}{\alpha}t}}\,{\rm d}t
&=2\left(2\sqrt{\frac{ab}{\alpha}}\right)^{-2b-k}
\int_{2\sqrt{\frac{ab}{\alpha}x}}^\infty
u^{2b+k-1}e^{-u}\,{\rm d}u\\
&=2\left(2\sqrt{\frac{ab}{\alpha}}\right)^{-2b-k}
\Gamma\left(2b+k,2\sqrt{\frac{ab}{\alpha}x}\right).
\end{align*}
Consequently, we arrive at
\begin{align*}
S_{a,b,\alpha}(x)&=\frac{2^{2-a-b}}{\Gamma(a)\Gamma(b)}\sum_{n=0}^{\infty}\sum_{k=0}^{n}\Lambda(a-b,n,k)
\Gamma\left(2b+k,2\sqrt{\frac{ab}{\alpha}x}\right).
\end{align*}
By substituting the expression for $\Lambda(a-b,n,k)$ in the above equation leads to the desired result.
\end{proof}

\section{\bf Summary}

In this paper, we have shown that the gamma-gamma distribution possesses the NWU property for certain ranges of the parameters, whereas in the remaining parameter region, it possesses neither the NBU nor the NWU property. Our proofs rely on concepts from the theory of \textit{stochastic orders}. By exploiting the representation of the gamma-gamma distribution as the product of two independent gamma random variables, we established alternative proofs of these results. Moreover, since the beta-prime distribution can be represented as the quotient of two independent gamma random variables, we established analogous results concerning its NBU and NWU properties. We also established a general result showing that the product of two independent positive random variables possesses the NWU property whenever at least one of the random variables possesses the NWU property. We obtained a similar result for the DFR property. Furthermore, we represented the survival function of the gamma-gamma distribution as the Laplace transform of a probability distribution belonging to the class of generalized gamma convolutions, thereby establishing that it is hyperbolically completely monotone. We also proved that the failure rate function of the gamma-gamma distribution is completely monotone and that its Mills ratio is a Bernstein function. Finally, we derived some bounds and series representations for the survival function of the gamma-gamma distribution. We hope that the results derived in this paper may be interesting and useful for the electrical engineering people and also for the mathematical community.

\end{document}